\let\ORIlabel\label
\let\ORIrefstepcounter\refstepcounter
\pgfplotsset{plotstyle/.style={
width=0.6\columnwidth,
height=0.4\columnwidth,
scale only axis,
xminorticks=true,
xlabel style={font=\normalsize},
yminorticks=true,
ylabel style={font=\normalsize},
axis background/.style={fill=white},
xmajorgrids,
xminorgrids,
ymajorgrids,
yminorgrids,
legend style={at={(0.5,1.03)}, anchor=south, legend columns=4, legend cell align=left, align=left, draw=white!15!black}
}
}
\newenvironment{customlegend}[1][]{%
        \begingroup
        \csname pgfplots@init@cleared@structures\endcsname
        \pgfplotsset{#1}%
    }{%
        \csname pgfplots@createlegend\endcsname
        \endgroup
    }%
\DeclareMathOperator*{\conv}{conv}
\begin{document}

\newcommand\footnotemarkfromtitle[1]{%
    \renewcommand{\thefootnote}{\fnsymbol{footnote}}%
    \footnotemark[#1]%
    \renewcommand{\thefootnote}{\arabic{footnote}}}

\newcommand{\TheTitle}{%
  A conservative invariant-domain preserving projection technique for hyperbolic systems under adaptive mesh refinement}
\newcommand{\TheAuthors}{J.~J. Harmon, M. Kronbichler, M. Maier, E.~J. Tovar}

\headers{Invariant-domain preserving adaptive mesh refinement}{\TheAuthors}

\title{{\TheTitle}\thanks{Draft version, \today \funding{
  The work of ET and JH was supported by the Laboratory Directed Research
  and Development (LDRD) program at Los Alamos National Laboratory under
  project numbers 20248233CT-IST and 20251079ER. Research conducted at Los
  Alamos National Laboratory is done under the auspices of the National
  Nuclear Security Administration of the U.S. Department of Energy under
  Contract No. 89233218CNA000001. MK was partially supported by the German
  Federal Ministry of Research, Technology and Space through the project
  ``PDExa: Optimized software methods for solving partial differential
  equations on exascale supercomputers'', grant agreement no. 16ME0637K,
  and the European Union – NextGenerationEU. MM has been partially
  supported by the National Science Foundation under grant DMS-2045636, by
  the Air Force Office of Scientific Research, USAF, under grant/contract
  number FA9550-23-1-0007, and by a TAMUS National Laboratory Office (NLO)
  Research Seed Funding (NLO-RSF). The Los Alamos unlimited release number is LA-UR-25-27248.%
}}}

\author{
    Jake~J.~Harmon\footnotemark[2]
    \and
    Martin~Kronbichler\footnotemark[3]
    \and
    Matthias~Maier\footnotemark[4]
    \and
    Eric~J.~Tovar\footnotemark[1]
}

\maketitle

\renewcommand{\thefootnote}{\fnsymbol{footnote}}
\footnotetext[1]{%
  Theoretical Division, Los Alamos National Laboratory, P.O. Box 1663, Los
  Alamos, NM, 87545, USA.}
\footnotetext[2]{%
X Computational Physics Division, Los Alamos National Laboratory, P.O. Box 1663, Los
Alamos, NM, 87545, USA.}
\footnotetext[3]{%
  Faculty of Mathematics, Ruhr University Bochum, Universitätsstr.~150,
  44801 Bochum, Germany.}
\footnotetext[4]{%
  Department of Mathematics, Texas A\&M University 3368 TAMU, College
  Station, TX 77843, USA.}
\renewcommand{\thefootnote}{\arabic{footnote}}

\begin{abstract}
  We propose a rigorous, conservative invariant-domain preserving (IDP) projection technique for hierarchical discretizations that enforces membership in physics-implied convex sets when mapping between solution spaces. When coupled with suitable refinement indicators, the proposed scheme enables a provably IDP adaptive numerical method for hyperbolic systems where preservation of physical properties is essential. In addition to proofs of these characteristics, we supply a detailed construction of the method in the context of a high-performance finite element code. To illustrate our proposed scheme, we study a suite of computationally challenging benchmark problems, demonstrating enhanced accuracy and efficiency properties while entirely avoiding \emph{ad hoc} corrections to preserve physical invariants.
\end{abstract}

\begin{keywords}
  Adaptive mesh refinement, hyperbolic systems, invariant-domain
  preservation, higher-order accuracy, convex limiting
\end{keywords}

\begin{AMS}
  65M60, 65M12, 35L50, 35L65, 76M10
\end{AMS}


\section{Introduction}
\label{sec:introduction}
Hyperbolic partial differential equations (PDEs), and more generally,
problems of hyperbolic \emph{character} govern the models of sophisticated
physical phenomena for many applications of interest -- ranging from the
study of tidal to astronomical flows, with predictive relevance over orders
of magnitude differences in temporal and spatial scales, cf. \cite{berger2021,
guermond2025high, stone2020, berger2024}. In addition to possessing
essential properties, namely the conservation or positivity of relevant physical quantities, hyperbolic systems yield solutions where, even from
globally smooth initial conditions, discontinuities can emerge in finite
time. An adequate numerical method for these problems, cognizant of the
underlying physics, must maintain conservation, among other
characteristics, with certifiable robustness even with
poor solution regularity.

Aside from the analytical challenges inherent to the study of
this class of PDEs, practical challenges emerge in abundance: the choice of discretization ansatz,
time integration, and ensuring robustness have dominated decades of computational
physics research \cite{toro2013}. The demands, in particular, for
full multi-physics, predictive three-dimensional models have driven research and
development investment for adaptive or \textit{active} discretization methods -- 
powerful schemes intended to deliver both accuracy \emph{and} efficiency.

Discretizations without mesh adaptivity -- such as static global mesh refinement or
cases with more
sophisticated manual or \emph{a priori} grid creation -- can often
lead to considerably higher numerical costs compared to
schemes where the mesh dynamically adapts with the evolution of the solution.
As a consequence, mesh adaptivity can extend the practical applicability
of PDE solvers, allowing to resolve physics of interest more efficiently
and attain expected convergence rates.
For adaptive schemes applied to certain
classes of problems (particularly elliptic or parabolic), even exponential
convergence may be obtained in the presence of poor solution regularity
\cite{babuska1981, gui1986I, gui1986II, gui1986III}. For the study of
hyperbolic PDEs, though absent theoretical guarantees of \emph{exponential}
convergence, the adaptive framework of Berger and Oliger continues to
dominate practice \cite{berger1982adaptive, berger1984, berger1984b,
berger2021}.

\renewcommand{\boxed}[1]{{#1}} 
Adaptive schemes typically use the following workflow:
\begin{align*}
  \boxed{\mathrm{SOLVE}}\;\rightarrow\;\boxed{\mathrm{ESTIMATE}}
  \;\rightarrow\;\boxed{\mathrm{MARK}}\;\rightarrow\;
  \boxed{\mathrm{REFINE}}\diagup\boxed{\mathrm{COARSEN}},
\end{align*}
where $\boxed{\mathrm{SOLVE}}$ represents a complete or incomplete update
step, $\boxed{\mathrm{ESTIMATE}}$ constitutes an error estimation (or
indication) procedure, $\boxed{\mathrm{MARK}}$ denotes the attribution of
particular discretization directives based the results of
$\boxed{\mathrm{ESTIMATE}}$, and finally,
$\boxed{\mathrm{REFINE}}\diagup\boxed{\mathrm{COARSEN}}$ signifies the
execution of those directives. Each stage consists of a suite of machinery,
especially when considering sophisticated refinement modalities, see
\cite{bangerth2003adaptive, bangerth2011}. Additional constraints on each component in the
workflow offer vastly different behavior of the overall scheme, with
certain configurations and target problems leading even to proofs of (quasi-)optimality \cite{carstensen2014, feischl2016}. The terminal step, however, masks in its
conceptual simplicity a wealth of considerations for the prolongation and
restriction of an approximate solution to a new solution space.

Now, in an ansatz agnostic context, maintenance of conservation typically relies on a
projection/interpolation procedure~\cite{farrell2009conservative} across mesh hierarchies. Continued adherence of
the approximate solution to an invariant-domain, however, requires
additional mechanisms. 
Such invariant-domains are intricately linked to the
underlying physics and closures, where an associated scheme, in general, is
considered invariant-domain \emph{preserving} (IDP) if for an initial
condition belonging to a convex set, the action of the scheme preserves
membership in that convex set \cite{guermond2016invariant}. 
In the context of adaptivity, recent work on preserving similar properties was carried out in~\cite{kuzmin2010failsafe,bittl2013adaptive,bonilla2020monotonicity}. 
We note here that the novel projection strategy in this work bears some resemblance
to the approaches discussed in~\cite{kuzmin2010failsafe,bittl2013adaptive}.
However, the novelty of the approach in this work is the realization of
enforcing conservation and the IDP property locally as part of a cell-wise
mass projection (see:\S\ref{sec:mass_projection}). Furthermore, the handling of hanging node constraints in
this context is also novel (see:\S\ref{sec:mass-redistribution}).
Higher-order
IDP schemes have, in recent years, delivered significant advantages for the
study of hyperbolic systems for a variety of models \cite{ern2024, guermond2018second, guermond2019invariant, guermond2025high}. An imposition of, e.g.,
positivity, via \emph{ad hoc} procedures falls outside the species of
scheme we target in this paper; the scheme itself must not rely on \emph{a
posteriori}, inconsistent truncation or tuning to maintain physical
invariants. As the target of this manuscript, we focus on the final stage
of the adaptivity workflow, $\boxed{\mathrm{{REFINE}/{COARSEN}}}$, and
enforcement of a conservative, IDP execution such that the entire
workflow---assuming a likewise conservative and IDP
$\boxed{\mathrm{SOLVE}}$ step---is \emph{guaranteed} robust.

To augment IDP capabilities with the full potency of adaptivity, we extend
the results of \cite{maier21b} to a certifiably robust, IDP adaptive
scheme. Without compromising in discretization tailoring, our scheme delivers
provably IDP locally enriched-reduced discretizations. Coupled with
suitable error indicators (for example, of the conserved quantities or
local entropy production), our proposed method renovates long-standing AMR
workflows to guarantee \emph{a priori} the essential characteristics of the
resulting (weak) solutions, delivering a method with
significantly enhanced fidelity and scalability.

\subsection{Contributions}
\begin{enumerate}[label=(\roman*)]
  \item We propose a novel conservative and invariant-domain preserving
    projection technique for hyperbolic conservation laws.
  \item We prove the IDP and conservative properties of the proposed
    scheme; see Lemmas \ref{lem:mass_projection},
    \ref{lem:mass_redistribution}, \ref{lem:conservative}.
  \item We provide a description of the numerical method in the context of
    a high-performance implementation as part of the open-source project
    \texttt{ryujin}~\cite{maier21b}.
  \item We confirm the theoretical properties and demonstrate the
    performance of our method on a suite of challenging benchmark problems spanning the shallow water and single- and multi-species Euler equations.
\end{enumerate}

\subsection{Organization} The remainder of the paper is organized as
follows. In Section \ref{sec:model}, we outline the generic hyperbolic
system of conservation laws. In Section
\ref{sec:preliminaries}, we detail the properties of the underlying finite
element ansatz, from which the weak solution spaces are generated.
Following these preliminaries, we derive the invariant domain preserving
projection, suited both for enrichment and reduction of the solution
spaces, in Section \ref{sec:mass_projection}, with supplementary
implementation details and the refinement criteria discussed in Section
\ref{sec:details}. A series of representative benchmarks is shown in
Section \ref{sec:illustrations} for specific hyperbolic systems such as shallow water equations and the compressible Euler equations, and their
associated invariant domains that we target. The results of these studies confirm the
numerical analysis of Section \ref{sec:mass_projection}, demonstrating the
proposed scheme delivers enhanced accuracy with provably IDP approximate
solutions.

\section{The model problem}
\label{sec:model}
This paper is concerned with numerical solutions under adaptive mesh
refinement of conservative hyperbolic systems in the form:
\begin{equation}\label{eq:model}
  \begin{cases}
    \partial_t \bu + \DIV\polf(\bu) = \bm{0}, &\bx\in\Omega,\quad t>0,\\
    \bu(\bx, 0) = \bu_0(\bx),  &\bx\in\Omega,
  \end{cases}
\end{equation}
where $\bu(\bx, t)\in\polR^m$ is the conserved variable,
$\polf(\bu)\in\polR^{m \times d}$ is the flux, $\Omega\subset\polR^d$ is
the spatial domain of dimension $d$, and $t$ is the temporal coordinate.
For the sake of simplicity, we assume that no external sources are present
in the hyperbolic system. We further assume that the solution $\bu$ belongs
to some convex admissible set $\calA\subset\polR^m$. More specifically, let
$\{\Psi^l(\bu)>0\}_{l=1}^{k}$ denote a set of $k$ quasi-concave constraints
that depend on the hyperbolic system of interest and represent
physical/thermodynamic properties or invariants. We define the generic
admissible set $\calA$ as follows:
\begin{equation}\label{eq:admissible_set}
  \calA\eqq\{\bu\in\polR^m : \Psi^l(\bu)>0,\,l=1,\dots,k\}.
\end{equation}
We recall that with null fluxes on the domain boundary $\partial \Omega$,
the following property holds: $\partial_t \int_\Omega \bu \diff x =
\bm{0}$; that is, mass is conserved over time. Let $\bu^n_h$ be a discrete
solution to~\eqref{eq:model} and let $T_h : \bu_h\upn \mapsto \bu_h\upnp$
be a generic approximation process involving adaptive mesh refinement. The
goal of this work is to construct $T_h$ such that at the discrete level we
have mass conservation and the discrete solution is invariant to the
admissible set (\ie invariant-domain preserving). We consider specific
hyperbolic systems, and their respective admissible sets, in the numerical illustrations section
\S\ref{sec:illustrations}.

\subsection{Invariant-domain preserving approximation}
The proposed projection detailed in \S\ref{sec:mass_projection} relies on
the existence of a non adaptive invariant-domain preserving approximation
for hyperbolic systems. In this work, we adopt the methodology described
in~\citet{guermond2019invariant} and refer to the approach as the
\textit{convex limiting} technique. Generally speaking, the convex limiting
technique comprises three main ingredients: \textup{(i)} a
mass-conservative low-order invariant domain preserving approximation
process; \textup{(ii)} a
mass-conservative high-order approximation that is potentially
invariant-domain violating; a limiting procedure such that the final solution is in the invariant set. The 
limiting procedure relies on local bounds which are often motivated by the
quasi-concave constraints that make up the admissible
set~\eqref{eq:admissible_set}. These three key ingredients for the convex
limiting technique lay the foundation for the projection technique
\S\ref{sec:mass_projection}. 


\section{Preliminaries}
\label{sec:preliminaries}
In this section, we describe the underlying spatial finite element
approximation details.

\subsection{Spatial discretization}
\label{subsec:spatial_discretization}

\begin{figure}
  \centering
  \includegraphics[width=0.5\linewidth]{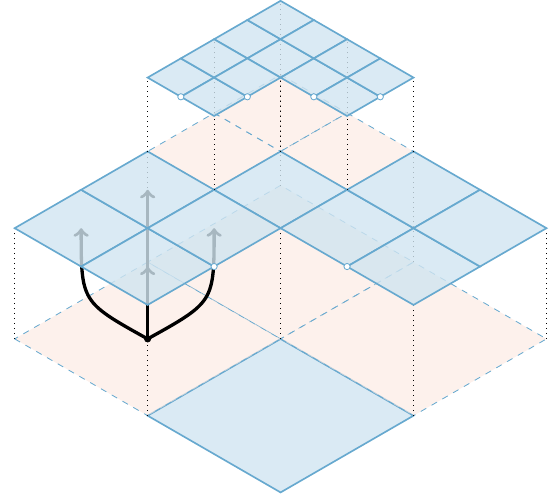}\\
  \includegraphics[width=0.65\linewidth]{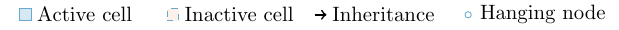}
  \caption{Illustration of hanging nodes and inheritance for a hierarchical discretization.}
  \label{fig:hanging_nodes}
\end{figure}
We start by summarizing the spatial finite element discretization.
We exemplify the derivation using concepts available in the widely used \texttt{deal.II} finite element
library~\citep{dealIIcanonical, dealII96}, albeit the underlying concepts are generic and applicable to many available finite element packages. Let
$\Omega\subset{\mathbb{R}^d}$, $d=1,2,3$, be a computational domain with an
associated partition $\Pi_h$ of mapped, shape-regular cells $K$ that are
obtained from the (quadrilateral or hexahedral) \emph{unit cell} $\hat
K=[0,1]^d$ by a diffeomorphic mapping $T_K:\hat K\to K$. We assume that the
compatibility requirement $T_{K_1}|_{K_1\cap K_2}\equiv T_{K_2}|_{K_1\cap
K_2}$ holds true between any two cells $K_1,K_2\in\Pi_h(\Omega)$. We relax
the usual partition requirement that neighboring cells have to share entire
\emph{faces} and permit 1-irregularity, or one level of hanging nodes, \ie vertices of cells
can also be located on the mapped center of a \emph{face} or \emph{line};
see Figure~\ref{fig:hanging_nodes} \citep{dealIIcanonical, dealII96}. In
practice, such a partition emerges from a hierarchy of
\emph{locally refined} meshes where the coarsest level is a fully regular
mesh without hanging nodes, and in each adaptation cycle only a subset of
cells is marked for coarsening or refinement. Here, a cell is refined by
simple algebraic bisection of the reference element. Furthermore, we constrain coarsening to yield only configurations possible in a previous cycle; namely, coarsening should not disrupt the inheritance tree of the hierarchical mesh.

Let $Q_p(\hat K)$ be the tensor-product space of polynomials
which are of degree less or equal $p$ in each variable $x_1$, \ldots $x_d$.
We now introduce a fully discontinuous and a continuous finite element
space subordinate to $\Pi_h$ as follows:
\begin{align}
  \label{eq:space_discontinuous}
  \mathcal{Q}_p^{\sharp}(\Pi_h) &\eqq \Big\{\varphi\in L^2(\Omega)\;:\;
  \varphi\big|_K\circ T_K(\hat{\vec x})\in Q_p(\hat K) \;\text{for each}\;
  K\in\Pi_h \Big\},
  \\[0.5em]
  \label{eq:space_continuous}
  \mathcal{Q}_{p}(\Pi_h) &\eqq \mathcal{Q}_p^{\sharp}(\Pi_h) \cap
  C^0(\Omega).
\end{align}
Here, $L^2(\Omega)$ is the usual Lebesgue space of measurable and square
integrable functions and $C^0(\Omega)$ is the space of continuous
functions. While mathematically appealing, definition
\eqref{eq:space_continuous} has the problem that it does not immediately
describe an algorithmic realization of the space or of its basis functions.
\texttt{deal.II} addresses this problem by first constructing an
intermediate space that is straightforward to construct by simply
identifying degrees of freedom of neighboring cells that are collocated on
shared geometric subobjects (such as vertices or faces). More precisely,
for any two mapped cells $K_1$, $K_2$ let $S(K_1,K_2)$ denote the union of
\emph{entire} shared geometric subelements (vertices, lines, quads). We
then set:
\begin{multline}
  \label{eq:space_sadreality}
  \tilde{\mathcal{Q}}_{p}(\Pi_h) \eqq \Big\{\varphi \in \mathcal{Q}_p^{\sharp}(\Pi_h)
  \;:\;
  \varphi\big|_{K_1}(\vec x)= \varphi\big|_{K_2}(\vec x)
  \\
  \;\text{for each}\; \vec x\in S(K_1, K_2)
  \text{ for each}\; K_1,K_2\in\Pi_h\Big\},
\end{multline}
Note that for locally refined meshes with hanging nodes, the space in this
intermediate space is not continuous globally. Taking the examples of two adjacent
cells $K_1, K_2$ of
different refinement level, the set $S(K_1, K_2)$ does not include the faces between
the two cells. In \texttt{deal.II} this space \eqref{eq:space_sadreality} is constructed
by iterating over all cells $K\in\Pi_h$, constructing a \emph{local}
enumeration of the Lagrange basis at Gauß--Lobatto points of $Q_p(K)$ on
$K$ and identifying this local enumeration with a global enumeration by
ensuring that collocated degrees of freedom on geometric subobjects are
only counted once. Let $\tilde\calV$ denote a global enumeration of
degrees of freedom of $\tilde{\mathcal{Q}}_{p}(\Pi_h)$ and let
$\big\{\tilde\varphi_i^h(\vec x)\,:\,i\in\tilde\calV\big\}$ be the
corresponding Lagrange basis.

In a second step, a set of algebraic \emph{hanging node} constraints
$\polA\polC_p(\Pi_h)$ are constructed that recover the continuous space
$\mathcal{Q}_{p}(\Pi_h)$ when enforced on the intermediate space
$\tilde{\mathcal{Q}}_{p}(\Pi_h)$. To this end we make the following definition:
\begin{definition}[Algebraic constraints]\label{def:algebraic-constraints}
  \begin{itemize}
    \item
      A \emph{constraint} consists of an index $i\in\tilde{\mathcal{V}}$,
      the \emph{constrained} degree of freedom, and a set of real-valued
      coefficients $\big\{ c^i_j\,:\,j\in\tilde{\mathcal{V}}\big\}$. It
      must hold that $c^i_i=0$.
    \item
      Let $\polA\polC$ be a finite set of algebraic constraints. We require
      that for a given index $i\in\tilde{\mathcal{V}}$ there is at most one
      constraint with that index in $\polA\polC$.
    \item
      We say that $\polA\polC$ is \emph{closed} if for every constraint
      $\big(i,\big\{c_j^i\,:\, j\in\tilde{\mathcal{V}}\big\}\big)$ it holds
      that all $j$ with $c_j^i\not=0$ are unconstrained, meaning
      $\big(j,\ast\big)\not\in\polA\polC$.
    \item
      Let $\polA\polC$ be a closed set of algebraic constraints.
      Furthermore, let $u_h(\bx)=\sum_{i\in\tilde\calV}\alpha_i\tilde\varphi_i^h(\vec
      x)\in\tilde{\mathcal{Q}}_{p}(\Pi_h)$ be arbitrary. We say that the
      algebraic constraints $\polA\polC$ are enforced on $u_h$ if
      for every constraint $\big(i,\big\{c_j^i\,:\,
      j\in\tilde{\mathcal{V}}\big\}\big)$
      holds that $\alpha_i\;=\;\sum_j c^i_j\alpha_{j}$.
  \end{itemize}
\end{definition}
\emph{Hanging node constraints} $\polA\polC_p(\Pi_h)$ are now defined as a
closed set of constraints such that for every $u_h(\vec
x)\in\tilde{\mathcal{Q}}_{p}(\Pi_h)$ the following conditions are
equivalent:
\begin{align*}
  \polA\polC_p(\Pi_h)\text{ are enforced on }u_h
  \quad\Longleftrightarrow\quad
  u_h\in \mathcal{Q}_{p}(\Pi_h).
\end{align*}
In the following we make use of the important observation that for any
entry of the hanging node constraints $\polA\polC_p(\Pi_h)$ it holds true
that
\begin{align*}
  \sum_{j\in\tilde{\mathcal{V}}}c_j^i\,=\,1.
  \color{black}
\end{align*}
For an index $j\in\tilde{\mathcal{V}}$ corresponding to an unconstrained
degree of freedom, we introduce a local \emph{stencil} of constrained
degrees of freedom as follows:
\begin{align*}
  I^{\polA\polC}(j)\eqq\big\{i\in\tilde\calV\,:\,c_{j}^i\not=0\big\}.
\end{align*}

\subsection{Mass matrices and notation}
Recall that $\big\{\tilde\varphi_i(\vec x)\,:\,i\in\tilde\calV\big\}$
denotes the Lagrange basis of $\tilde{\mathcal{Q}}_{p}(\Pi_h)$. We now
introduce local and global mass matrices as follows. For every pair
$i,j\in\tilde\calV$ and every $K\in\Pi_h$ we set:
\begin{align}
  \label{eq:mass_matrices}
  m_{ij}^K \eqq \int_K\tilde\varphi_i^h(\vec x)\tilde\varphi_j^h(\vec
  x)\text{d}x,
  \quad
  m_{i}^K \eqq \int_K\tilde\varphi_i^h(\vec x) \text{d}x,
  \quad
  \tilde m_{i} \eqq \int_\Omega\tilde\varphi_i^h(\vec x) \text{d}x.
\end{align}
We note that the choice of local Lagrange basis functions in Gauß-Lobatto
points ensures that $m_i^K>0$ and $\tilde m_i>0$ for all $i\in\tilde\calV$.
Futhermore, there holds a partition of unity property:
\begin{align*}
  \sum_{i\in\tilde\calV}\sum_{K\in\Pi_h}m_i^K \,=\,
  \sum_{i\in\tilde\calV} \tilde m_i \,=\, |\Omega|.
\end{align*}
As a last step, we introduce a local \emph{stencil} of coupling
indices as follows:
\begin{align*}
  I^K(i)\eqq\big\{j\in\tilde\calV\,:\,m_{ij}^K\not=0\big\}.
\end{align*}


\section{An invariant-domain preserving mass projection}
\label{sec:mass_projection}
We now describe a projection operation that is mass conservative and
preserves an invariant domain. We choose to introduce the projection
operation here in full mathematical generality as a projection from a
general function space into the finite element space,
$P_h:L^\infty(\Omega)^m\to \mathcal{Q}_{p}(\Pi_h)^m$ and prove that the
projection is conservative and invariant domain preserving. We describe the
actual implementation of the projection with the algorithmic realization in
Section~\ref{sec:details}.

Recalling the model assumptions in Section~\ref{eq:model}, we consider an
$m$-dimensional system of hyperbolic conservation equations with a convex
admissible set $\calA\subset\polR^m$,
\begin{align*}
  \calA(\Omega)\;\eqq\;\big\{\bu\in L^\infty(\Omega)^m\;:\;
  \bu(\vec x) \in\calA\;\text{for a.\,e. }\vec x\in\Omega\big\}.
\end{align*}
Here, $L^\infty(\Omega)^m\eqq \underbrace{L^\infty\times \cdots\times
L^\infty}_{m}$. Our goal for this section is to construct a projection
$P_h\;:\;\calA(\Omega)\rightarrow\mathcal{Q}_{p}(\Pi_h)^m$ that is mass
conservative,
\begin{align*}
  \int_\Omega P_h(\bu)\text{d}x\,=\,\int_\Omega \bu\,\text{d}x,
\end{align*}
and preserves an invariant domain. For the latter property we will show
that the projection operation results in projected function values that lie
within a \emph{local} convex hull. That is to say, with
$P_h(\bu)=\sum_i \bU_i\varphi_i^h(\bx)$ it must hold that
\begin{align*}
  \bU_i \;\in\; \conv_{j\in I^{\polA\polC}(i)\cup\{i\}}
  \conv_{\substack{K\in\Pi_h \\ \text{supp}(\tilde\varphi_j^h)\cap K\not=\emptyset}}
  \conv_{\bx\in K}\big(\bu(\bx)\big)\subset\mathcal{A}.
\end{align*}
The convex hull in this condition is constructed over all values
of $\bu(\bx)$ that can possibly influence $\bU_i$ with respect to the
algorithm outlined below. The projection $P_h$ will be composed of an
element-wise mass projection $P_h^{\sharp}$, a nodal averaging projection
$P^{\text{avg}}_h$, and a mass redistribution $P_h^{\text{red.}}$:
\begin{multline*}
  \calA(\Omega)
  \;\xrightarrow{\quad P_h^{\sharp}\quad}\;
  \mathcal{Q}_p^{\sharp}(\Pi_h)^m
  \cap \calA(\Omega)
  \;\xrightarrow{\quad P_h^{\text{avg.}}\quad}\;
  \tilde{\mathcal{Q}}_{p}(\Pi_h)^m
  \cap \calA(\Omega)
  \\[0.5em]
  \;\xrightarrow{\quad P_h^{\text{red.}}\quad}\;
  \mathcal{Q}_{p}(\Pi_h)^m
  \cap \calA(\Omega).
\end{multline*}
We proceed as follows.

\subsection{Element-wise mass projection}\label{sec:element-wise}
We now define the element-wise mass projection: $P_h^{\sharp}(\bu) =
\sum_{i,K} \bU^{\sharp}_{i,K}\tilde \varphi_i^h\big|_K$. Given an
admissible state $\bu\in\calA(\Omega)$, we first define the following
low-order operator and high-order mass projection on each element
$K\in\Pi_h$:
\begin{align*}
    &P_h^{\sharp, L}\;:\;
    \calA(\Omega) \rightarrow \mathcal{Q}_p^{\sharp}(\Pi_h)^m,
    \quad
    P_h^{\sharp,L}(\bu) = \sum_{i,K} \bU^{\sharp,L}_{i,K}\tilde
    \varphi_i^h\big|_K, \\
     &P_h^{\sharp, H}\;:\;
    \calA(\Omega) \rightarrow \mathcal{Q}_p^{\sharp}(\Pi_h)^m,
    \quad
    P_h^{\sharp,L}(\bu) = \sum_{i,K} \bU^{\sharp,H}_{i,K}\tilde
    \varphi_i^h\big|_K,
  \end{align*}
where the states $\bU^{\sharp,L}_i$,  $\bU^{\sharp,H}_i\in\polR^m$, are
chosen such that
\begin{align}
  \label{eq:element_wise_ri}
  \sum_{j}m_{ij}^K \bU^{\sharp,H}_{j,K} = m_i^K \bU^{\sharp,L}_{i,K} = \bR_i^K,
  \quad
  \bR_i^K\eqq \int_K\bu(\bx)\tilde\varphi_i^h\text{d}x,
  \quad \forall\,i\in\tilde\calV,j\in I^K(i).
\end{align}
The goal is to write the state $\bU^{\sharp}_{i,K}$ as a convex combination
of the low-order state $\bU^{\sharp,L}_i$ and high-order state
$\bU^{\sharp,H}_i$ so that $P_h^{\sharp}(\bu)\big|_K\in\text{conv}_{\bx\in
K}(\bu(\bx))$. Introducing $b^K_{ij}=m_i^K (m_{ij}^{K})^{-1}-\delta_{ij}$
and after some algebra we arrive at the identity:
\begin{align}
  \label{eq:element_wise_pi}
  \bU^{\sharp,H}_{i,K}\,-\,\bU^{\sharp,L}_{i,K}
  \;=\; \sum_{j} \kappa \bP_{ij}^K,
  \quad
  \bP_{ij}^K\;\eqq\;\frac{1}{\kappa m_i^K}\Big(b_{ij} \bR_j^K - b_{ji}
  \bR_i^K\Big),
\end{align}
where $\kappa=\text{card}\big(I^K(i)\big)^{-1}$. We now introduce limiter
coefficients $l_{ij}^K\in[0,1]$, with $l_{ij}^K= l_{ji}^K$, such that
\begin{align}
  \label{eq:element_wise_final}
  \bU^{\sharp}_{i,K} \;\eqq\; \bU^{\sharp,L}_{i,K} \;+\; \sum_{j}
  \kappa\,l^K_{ij} \bP_{ij}^K
  \quad\in\quad
  \conv_{\bx\in K}\big(\bu(\bx)\big).
\end{align}
Note that when $l_{ij}^K = 0$ we recover the low-order state,
$\bU^{\sharp,L}_{i,K}$, and when $l_{ij}^K = 1$ we recover the high-order
state, $\bU^{\sharp,H}_{i,K}$.  A detailed discussion on how the limiter
bounds are computed is outlined in Section~\ref{sec:details}. We collect
key properties of the projection in the following lemma.
\begin{lemma}\label{lem:mass_projection}
  Assume the limiting procedure returns symmetric coefficients $l_{ij}^K$
  and that $l_{ij}^K=1$ for the case of $\bu\in
  \mathcal{Q}_p^{\sharp}(\Pi_h)^m \cap \calA(\Omega)$. Then, the
  element-wise mass projection
  \begin{align*}
    P_h^{\sharp}\;:\; \calA(\Omega) \rightarrow
    \mathcal{Q}_p^{\sharp}(\Pi_h)^m \cap \calA(\Omega),
    \quad
    P_h^{\sharp}(\bu) = \sum_{i,K} \bU^{\sharp}_{i,K}\tilde
    \varphi_i^h\big|_K,
  \end{align*}
  is a mass conservative, invariant-domain preserving projection, meaning
  \begin{align*}
    \int_\Omega P_h^{\sharp}(\bu)\text{d}x\,=\,\int_\Omega \bu\text{d}x,
    \qquad
    P_h^{\sharp}(\bu)\big|_K \in \conv_{\bx\in K}\big(\bu(\bx)\big)
    \subset\mathcal{A}.
  \end{align*}
\end{lemma}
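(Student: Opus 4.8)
The plan is to establish, one at a time, the four statements packed into the lemma: that $P_h^{\sharp}$ is well defined, that it maps into $\mathcal{Q}_p^{\sharp}(\Pi_h)^m\cap\calA(\Omega)$, that it is mass conservative, and that it is idempotent on its image. Throughout I would take the algebraic identities \eqref{eq:element_wise_ri}--\eqref{eq:element_wise_final} and the Lagrange partition of unity as given. For well-definedness: on each $K$ the local mass matrix $(m_{ij}^K)_{i,j}$ is the Gram matrix of the linearly independent Lagrange shape functions restricted to $K$, hence symmetric positive definite, which together with $m_i^K>0$ makes $\bU^{\sharp,L}_{i,K}$, $\bU^{\sharp,H}_{i,K}$, and therefore $\bU^{\sharp}_{i,K}$ from \eqref{eq:element_wise_final}, uniquely determined; hence $P_h^{\sharp}(\bu)\in\mathcal{Q}_p^{\sharp}(\Pi_h)^m$. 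For the invariant-domain inclusion: $\calA$ is convex, being a finite intersection of the convex superlevel sets $\{\Psi^l>0\}$ of the quasi-concave functions $\Psi^l$ (see \eqref{eq:admissible_set}), so since $\bu(\bx)\in\calA$ for a.e.~$\bx$ the (essential) cell-wise hull obeys $\conv_{\bx\in K}(\bu(\bx))\subseteq\calA$; combining this with the defining property \eqref{eq:element_wise_final} of the limited states yields $\bU^{\sharp}_{i,K}\in\conv_{\bx\in K}(\bu(\bx))\subseteq\calA$ for every cell $K$ and every associated degree of freedom $i$, which is exactly the asserted inclusion.

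For mass conservation I would integrate $P_h^{\sharp}(\bu)=\sum_{i,K}\bU^{\sharp}_{i,K}\tilde\varphi_i^h\big|_K$ over $\Omega$, obtaining $\int_\Omega P_h^{\sharp}(\bu)\,\text{d}x=\sum_{i,K}m_i^K\bU^{\sharp}_{i,K}$, and then use \eqref{eq:element_wise_final} with $m_i^K\bU^{\sharp,L}_{i,K}=\bR_i^K$ to split this as $\sum_{i,K}\bR_i^K+\sum_{K}\sum_{i,j}l_{ij}^K\,\bm{F}_{ij}^K$, where $\bm{F}_{ij}^K\eqq m_i^K\kappa\,\bP_{ij}^K=b_{ij}^K\bR_j^K-b_{ji}^K\bR_i^K$ by the definition of $\bP_{ij}^K$ in \eqref{eq:element_wise_pi}. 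The first sum is $\sum_K\int_K\bu(\bx)\big(\sum_i\tilde\varphi_i^h\big)\,\text{d}x=\int_\Omega\bu\,\text{d}x$ by the partition of unity $\sum_i\tilde\varphi_i^h\big|_K\equiv1$, and the second sum vanishes cell by cell because $\bm{F}_{ij}^K=-\bm{F}_{ji}^K$ is antisymmetric while $l_{ij}^K=l_{ji}^K$ is symmetric by hypothesis; hence $\int_\Omega P_h^{\sharp}(\bu)\,\text{d}x=\int_\Omega\bu\,\text{d}x$. I note that the factor $\kappa$ drops out the moment it multiplies $m_i^K$, so its exact value (or any $i$-dependence) is immaterial here.

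For the projection property, let $\bu\in\mathcal{Q}_p^{\sharp}(\Pi_h)^m\cap\calA(\Omega)$ with cell-local expansion $\bu\big|_K=\sum_j\bU_{j,K}\tilde\varphi_j^h\big|_K$. The hypothesis $l_{ij}^K=1$ gives $\bU^{\sharp}_{i,K}=\bU^{\sharp,H}_{i,K}$, and inserting this expansion into $\bR_i^K$ turns \eqref{eq:element_wise_ri} into $\sum_j m_{ij}^K\bU^{\sharp,H}_{j,K}=\sum_j m_{ij}^K\bU_{j,K}$, so invertibility of the local mass matrix yields $\bU^{\sharp,H}_{j,K}=\bU_{j,K}$, i.e.~$P_h^{\sharp}(\bu)=\bu$; together with the mapping property this gives $P_h^{\sharp}\circ P_h^{\sharp}=P_h^{\sharp}$.

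I expect the only genuinely delicate points to be bookkeeping: regrouping so that $\kappa$ and $m_i^K$ get absorbed into the antisymmetric flux $\bm{F}_{ij}^K$, which is what makes the $i\leftrightarrow j$ cancellation transparent, and being explicit that membership in $\calA(\Omega)$ and the hull $\conv_{\bx\in K}(\bu(\bx))$ must be read through the essential range of $\bu$, and that it is the nodal states $\bU^{\sharp}_{i,K}$---not the interpolant $P_h^{\sharp}(\bu)$ itself, which for $p\ge2$ may leave $\calA$ between Gauss-Lobatto nodes---that carry the invariant-domain property.
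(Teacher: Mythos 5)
Your proof is correct and follows essentially the same route as the paper's: the partition-of-unity identity for the low- and high-order masses, antisymmetry of $b_{ij}\bR_j^K - b_{ji}\bR_i^K$ against the symmetric $l_{ij}^K$ for conservation, and the $l_{ij}^K=1$ hypothesis together with invertibility of the local mass matrix for the projection property. The only ingredient you leave implicit that the paper makes explicit is that $\bU^{\sharp,L}_{i,K}=\frac{1}{m_i^K}\int_K\bu\,\tilde\varphi_i^h\,\mathrm{d}x$ is itself a convex average lying in $\conv_{\bx\in K}\big(\bu(\bx)\big)$, which is what guarantees that the limiter constraint \eqref{eq:element_wise_final} is feasible (with $l_{ij}^K=0$) rather than vacuous.
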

\begin{proof}
  The preliminary low- and high-order updates are conservative. This is an
  immediate consequence of the partition of unity property of the Lagrange
  basis $\tilde\varphi^h_{i}(\bx)$ on $K$. Summing \eqref{eq:element_wise_ri}
  shows:
  \begin{align*}
    \sum_{i,j}m_{ij}^K \bU^{\sharp,H}_{j,K} = \sum_{i}m_i^K
    \bU^{\sharp,L}_{i,K} = \int_K\bu(\bx)\text{d}x.
  \end{align*}
  Multiplying~\eqref{eq:element_wise_final} by $m_i^K$ and summing over
  $i\in\tilde\calV$ recovers the same property for the limited update:
  \begin{align*}
    \sum_{i}m_{i}^K \bU^{\sharp}_{i,K}
    = \sum_{i}\left(m_i^K \bU^{\sharp,L}_{i,K}\right) +
    \underbrace{\sum_{i,j} l_{ij}^K\Big(b_{ij} \bR_j^K - b_{ji}
    \bR_i^K\Big)}_{=\,0}
    = \int_K\bu(\bx)\text{d}x.
  \end{align*}
  Here, the last sum on the left hand side vanishes because $b_{ij} \bR_j^K
  - b_{ji} \bR_i^K$ is antisymmetric. Furthermore, the low-order update is
  an averaging procedure which preserves convex sets:
  \begin{align*}
    \bU^{\sharp,L}_{i,K} \;=\;
    \frac{1}{m_i^K}\int_K\bu(\bx)\tilde\varphi_i^h\text{d}x
    \;\in\;
    \conv_{\bx\in K}\big(\bu(\bx)\big).
  \end{align*}
  This ensures that the limiting procedure \eqref{eq:element_wise_final}
  has at least one solution, $l_{ij}^K = 0$. By
  construction we have that $\bU^{\sharp}_{i,K} \in \conv_{\bx\in
  K}\big(\bu(\bx)\big)$. It remains to show that $P_h^{\sharp}$ is a
  projection. Note that the preliminary high-order operator $P_h^{\sharp,
  H}$ is a projection, but the low-order order $P_h^{\sharp, L}$ is not.
  For a function $\bu(\bx)\in \mathcal{Q}_p^{\sharp}(\Pi_h)^m$ it follows
  that the high-order mass projection gives $\sum_{i,K}
  \bU^{\sharp,H}_{i,K}\tilde \varphi_i^h\big|_K(\bx)=\bu(\bx) \in
  \conv_{\bx\in K}\big(\bu(\bx)\big)$. This implies that we can choose
  $l_{ij}^K=1$ throughout and---provided the limiting procedure is optimal
  in this regard---we conclude that $\bU^{\sharp}_{i,K} =
  \bU^{\sharp,H}_{i,K}$, which implies $P_h^{\sharp}\bu\equiv\bu$.
\end{proof}

\subsection{Nodal averaging projection}\label{sec:nodal-averaging}
We now define the nodal averaging projection $P_h^{\text{avg.}}$. Let
$\bu_h^{\sharp} = \sum_{i,K} \bU^{\sharp}_{i,K}\tilde\varphi_i^h\big|_K
\in\mathcal{Q}_p^{\sharp}(\Pi_h)^m$ be arbitrary. We then set
\begin{gather*}
  P_h^{\text{avg.}}\;:\; \mathcal{Q}_p^{\sharp}(\Pi_h)^m \cap \calA(\Omega)
  \quad\rightarrow\quad
  \tilde{\mathcal{Q}}_{p} \cap \calA(\Omega),
  \\[0.25em]
  P_h^{\text{avg.}}(\bu_h^{\sharp}) \eqq \sum_{i}
  \tilde{\bU}_{i}\tilde\varphi_i^h, \quad\text{with}\quad
  \tilde{\bU}_{i}\,\eqq\, \frac{1}{\tilde m_i}\sum_K
  m_i^K\,\bU^{\sharp}_{i,K}.
\end{gather*}
From this definition and recalling that $\tilde m_i=\sum_{K}m_i^K$ it
follows immediately that $P_h^{\text{avg.}}$ is a projection. The
projection consists of a simple averaging procedure; it is thus mass
conservative and preserves the invariant domain:
\begin{align*}
  \int_\Omega P_h^{\text{avg.}}(\bu^\sharp_h)\text{d}x\,=\,\int_\Omega
  \bu^\sharp_h\text{d}x,
  \qquad
  P_h^{\text{avg.}}(\bu^\sharp_h)\big|_K \in \conv_{\bx\in
  K}\big(\bu^\sharp_h(\bx)\big)\subset\mathcal{A}.
\end{align*}

\subsection{Mass redistribution}\label{sec:mass-redistribution}
As a final step in our mass projection we now need to enforce hanging node
constraints $\polA\polC_p(\Pi_h)$ on the intermediate result of the nodal
averaging projection via the operator $P_h^{\text{red.}}$. To this end let
$\tilde\bu_h = \sum_{i} \tilde\bU_{i}\tilde\varphi_i^h
\in\tilde{\mathcal{Q}}_p(\Pi_h)^m$ be arbitrary. We recall that
$\big\{\tilde\varphi_i^h(\vec x)\,:\,i\in\tilde\calV\big\}$ is the Lagrange
basis of $\tilde{\mathcal{Q}}_{p}(\Pi_h)$, which is defined in
\eqref{eq:space_sadreality}. We adopt the notation
$j\not\in\polA\polC$ to denote any index $j\in\tilde{\mathcal{V}}$
corresponding to an unconstrained degree of freedom, i.\,e.,
$(j,\ast)\not\in\polA\polC_p(\Pi_h)$.

Let $\big(j,\big\{c_i^j\,:\, i\in\tilde{\mathcal{V}}\big\}\big)$ be an
entry in the affine constraint set $\polA\polC_p(\Pi_h)$ and set
\begin{align*}
  \bD_j\;\eqq\;\tilde\bU_j-\sum_{i\in\tilde{\mathcal{V}}}c_i^j\tilde\bU_{i}.
\end{align*}
$\bD_j$ quantifies the mismatch between the preliminary state $\tilde\bU_j$
of a constrained degree of freedom $j$ and its value according to the
hanging node constraint $\polA\polC_p(\Pi_h)$. Similar to the procedure
outlined in Section~\ref{sec:element-wise}, we introduce again a low-order
and a high-order operator:
\begin{align*}
  \begin{aligned}
    P_h^{\text{red.},L}\;:\;
    \mathcal{Q}_p^{\sharp}(\Pi_h)^m \cap \calA(\Omega) &\rightarrow
    \mathcal{Q}_p(\Pi_h)^m,
    &\quad
    P_h^{\text{red.},L}(\tilde\bu_h) &= \sum_{i} \bU^{L}_{i}\tilde
    \varphi_i^h, \\
    P_h^{\text{red.},H}\;:\;
    \mathcal{Q}_p^{\sharp}(\Pi_h)^m \cap \calA(\Omega) &\rightarrow
    \mathcal{Q}_p(\Pi_h)^m \cap \calA(\Omega),
    &\quad
    P_h^{\text{red.},H}(\tilde\bu_h) &= \sum_{i} \bU^{H}_{i}\tilde
    \varphi_i^h,
  \end{aligned}
\end{align*}
where the states $\bU^{L}_i$,  $\bU^{H}_i\in\polR^m$, are constructed as
follows for $i\not\in\polA\polC$:
\begin{align}
  m_i\,&\eqq\,\tilde m_i + \sum_{j\in I^{\polA\polC}(i)} c^j_i\tilde
  m_j,\notag
  \\
  \label{eq:low_order_red}
  \bU^{L}_i\;&\eqq\; \frac
  {\tilde m_i\tilde\bU_i \;+\; \sum_{j\in I^{\polA\polC}(i)}c^j_i\tilde
  m_j\tilde\bU_j}{m_i},
  \\
  \label{eq:high_order_red}
  \bU^{H}_i\;&\eqq\; \frac
  {\tilde m_i\tilde\bU_i \;+\; \sum_{j\in I^{\polA\polC}(i)}c^j_i\tilde
  m_j\big(\tilde\bU_i+\bD_j\big)}{m_i}.
\end{align}
Note that the mass lumping strategy constructing $m_i$, which includes the contribution from adjacent constrained degrees of freedom, is a consistent choice for the Gau\ss{}--Lobatto basis, see, e.g.,~\cite[Sec.~5.3]{Kormann2016time}. For constrained degrees of freedom $i\in\polA\polC$, we apply
Definition~\ref{def:algebraic-constraints} and simply set
\begin{align*}
  \bU^{\ast}_i\;&\eqq\; \sum_{j\in\tilde{\mathcal{V}}}c_j^i\bU^{\ast}_j,
  \qquad\ast=L,H.
\end{align*}

\begin{lemma}\label{lem:mass_redistribution}
  The low-order update is conservative and preserves an invariant domain,
  viz. $\bU^{L}_i\;\in\; \conv\big\{\tilde\bU_i,\conv_{j\in
  I^{\polA\polC}(i)}\big(\tilde\bU_j)\big\}$. The high-order update
  is conservative and describes a projection into $\mathcal{Q}_p(\Pi_h)^m$.
\end{lemma}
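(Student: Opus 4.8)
The plan is to verify the three assertions — conservation of the low-order update, the convex-hull bound for the low-order update, and the conservation plus projection property of the high-order update — in that order, in each case splitting the analysis into unconstrained indices $i\notin\polA\polC$ and constrained indices $i\in\polA\polC$.

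First I would establish conservation of $P_h^{\text{red.},L}$. The key identity is that for each entry $(j,\{c_i^j\})$ of $\polA\polC_p(\Pi_h)$ one has $\sum_i c_i^j = 1$ (recorded in the preliminaries), and that $i\in I^{\polA\polC}(j)$ iff $c_i^j\neq 0$. I would compute $\int_\Omega P_h^{\text{red.},L}(\tilde\bu_h)\diff x = \sum_i \tilde m_i \bU^\ast_i$ summed over \emph{all} $i\in\tilde\calV$ (constrained and unconstrained), then reorganize the sum: the constrained contributions $\sum_{i\in\polA\polC}\tilde m_i\sum_j c_j^i \bU^L_j$ get redistributed onto the unconstrained indices, and when one matches the bookkeeping the weight multiplying $\bU^L_i$ at an unconstrained $i$ is exactly $m_i = \tilde m_i + \sum_{j\in I^{\polA\polC}(i)} c_i^j \tilde m_j$. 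Plugging in the definition \eqref{eq:low_order_red} of $\bU^L_i$ then collapses the expression to $\sum_i \tilde m_i \tilde\bU_i = \int_\Omega \tilde\bu_h \diff x$, which equals $\int_\Omega \bu_h^\sharp\diff x$ by the conservation of $P_h^{\text{avg.}}$. The convex-hull bound $\bU^L_i \in \conv\{\tilde\bU_i, \conv_{j\in I^{\polA\polC}(i)}(\tilde\bU_j)\}$ is more delicate: \eqref{eq:low_order_red} writes $\bU^L_i$ as $\tfrac{1}{m_i}\big(\tilde m_i\tilde\bU_i + \sum_j c_i^j \tilde m_j \tilde\bU_j\big)$, which is a genuine convex combination \emph{provided the coefficients are nonnegative and sum to one}; summation to one follows from the definition of $m_i$, so the real content is that $\tilde m_i>0$, $\tilde m_j>0$, and $c_i^j\ge 0$ for hanging-node constraints. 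I would invoke the positivity of the Gau\ss--Lobatto mass entries $\tilde m_i$ stated in the preliminaries and the nonnegativity of hanging-node interpolation weights (standard for bisection refinement; it should be stated or cited, as with the consistency remark referencing~\cite{Kormann2016time}). For constrained $i$, $\bU^L_i = \sum_j c_j^i \bU^L_j$ is, by the same reasoning, a convex combination of already-admissible unconstrained values, hence also in $\calA$.

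Next, for the high-order update I would treat conservation identically: the only difference between \eqref{eq:high_order_red} and \eqref{eq:low_order_red} is the replacement of $\tilde\bU_j$ by $\tilde\bU_i + \bD_j$ inside the constrained sum, and since $\bD_j = \tilde\bU_j - \sum_i c_i^j\tilde\bU_i$, the term $\tilde\bU_i + \bD_j$ is constructed precisely so that, after the redistribution bookkeeping, the constrained degrees of freedom take exactly the values dictated by $\polA\polC_p(\Pi_h)$; I would check that this substitution does not change the total mass (the added and subtracted pieces cancel against the constrained-index contributions in the global sum), so $\int_\Omega P_h^{\text{red.},H}(\tilde\bu_h)\diff x = \int_\Omega \tilde\bu_h\diff x$. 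To see that $P_h^{\text{red.},H}$ is a projection onto $\mathcal{Q}_p(\Pi_h)^m$: by Definition~\ref{def:algebraic-constraints} the constrained degrees of freedom satisfy $\bU^H_i = \sum_j c_j^i\bU^H_j$, so the output lies in $\mathcal{Q}_p(\Pi_h)^m$; and if $\tilde\bu_h$ already belongs to $\mathcal{Q}_p(\Pi_h)^m$ then $\bD_j = 0$ for every constrained $j$, whence \eqref{eq:high_order_red} reduces to $\bU^H_i = \tilde\bU_i$ for unconstrained $i$ and the constraint relations are unchanged, giving $P_h^{\text{red.},H}(\tilde\bu_h) = \tilde\bu_h$. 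Idempotency then follows since $P_h^{\text{red.},H}(\tilde\bu_h)\in\mathcal{Q}_p(\Pi_h)^m$.

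The main obstacle I anticipate is the conservation bookkeeping: carefully accounting for how the mass $\tilde m_i$ attached to a constrained degree of freedom $i$ is partitioned among the unconstrained degrees of freedom $j$ in its stencil with weights $c_j^i$, and confirming this is exactly compensated by the $\sum_{j\in I^{\polA\polC}(i)} c_i^j \tilde m_j$ term in the definition of $m_i$. This requires an index-swapping argument ($\sum_i \sum_{j\in I^{\polA\polC}(i)} = \sum_j \sum_{i: c_i^j\neq 0}$) and the closedness of $\polA\polC_p(\Pi_h)$, which guarantees that constrained and unconstrained indices do not mix recursively. A secondary subtlety worth flagging explicitly is the nonnegativity of the constraint coefficients $c_i^j$, on which the invariant-domain claim for the low-order update rests and which is not a consequence of the abstract Definition~\ref{def:algebraic-constraints} alone but of the specific geometry of $h$-refinement by bisection.
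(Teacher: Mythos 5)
Your proposal is correct and follows essentially the same route as the paper's proof: conservation via the index swap $\sum_{i\notin\polA\polC}\sum_{j\in I^{\polA\polC}(i)}=\sum_{j\in\polA\polC}\sum_{i}$ together with $\sum_i c_i^j=1$, the convex-hull bound by reading \eqref{eq:low_order_red} as a combination with coefficients summing to one, and the projection property from $\bD_j=\vec 0$ whenever $\tilde\bu_h\in\mathcal{Q}_p(\Pi_h)^m$. The one place you go beyond the paper is in flagging that the convex-combination reading requires $c_i^j\ge 0$, which does not follow from Definition~\ref{def:algebraic-constraints} and is in fact false for hanging-node interpolation weights of Lagrange bases with $p\ge 2$; the paper's proof asserts the containment without comment, so your caveat identifies a hypothesis that the argument genuinely needs and that should be made explicit (or the convex hull enlarged) rather than a flaw in your own plan.
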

\begin{proof}
  For $\tilde\bu_h\in\mathcal{Q}_p(\Pi_h)^m$ affine constraints are already
  satisfied and $\bD_j=\vec0$. In this case \eqref{eq:high_order_red}
  reduces to the identity operation, $\bU^{H}_i=\tilde\bU_i$, implying that
  $P_h^{\text{red.},H}$ is indeed a projection. Rewriting the low-order
  update yields:
  \begin{align*}
    \bU^{L}_i\,=\,
    \frac{\tilde m_i}{m_i}\tilde\bU_i + \sum_{j\in I^{\polA\polC}(i)}
    \frac{c^j_i\tilde m_j}{m_i}\tilde\bU_j
    \;\in\; \conv\big\{\tilde\bU_i,\conv_{j\in
    I^{\polA\polC}(i)}\big(\tilde\bU_j)\big\}.
  \end{align*}
  Regarding mass conservation we observe that:
  \begin{align*}
    \sum_{i\in\tilde{\mathcal{V}}}\tilde m_i\bU_i^{L}
    \;&=\;
    \sum_{i\not\in\polA\polC}m_i\bU_i^{L}
    \;=\;
     \sum_{i\not\in\polA\polC}\tilde m_i\tilde\bU_i
    +\sum_{j\in\polA\polC}\underbrace{\Big(\sum_{i\in\tilde{\mathcal{V}}}
    c_i^j\Big)}_{=\,1}\tilde m_j\tilde\bU_j
    \;=\; \sum_{i\in\tilde{\mathcal{V}}}\tilde m_i\tilde\bU_i,
    \intertext{and}
    \sum_{i\in\tilde{\mathcal{V}}}\tilde m_i\bU_i^{H}
    \;&=\;
    \sum_{i\not\in\polA\polC}m_i\bU_i^{H}
    \\
    \;&=\;
     \sum_{i\not\in\polA\polC}\tilde m_i\tilde\bU_i
    +\sum_{i\not\in\polA\polC}\sum_{j\in\tilde{\mathcal{V}}}
    c_i^j \tilde m_j \big(\tilde\bU_i+\tilde\bU_j-
    \sum_{k\in\tilde{\mathcal{V}}}c^j_k\tilde\bU_k\big)
    \\
    \;&=\;
     \sum_{i\not\in\polA\polC}\tilde m_i\tilde\bU_i
    +\sum_{j\in\polA\polC}\underbrace{\Big(\sum_{i\in\tilde{\mathcal{V}}}
    c_i^j\Big)}_{=\,1}\tilde m_j\tilde\bU_j
    \\
    &\qquad\qquad\qquad\qquad
    +\sum_{j\in\tilde{\mathcal{V}}} \tilde m_j
    \underbrace{\Big(\sum_{i\not\in\polA\polC} c_i^j\tilde\bU_i -
    \sum_{k\not\in\polA\polC}\big(\sum_{i\not\in\polA\polC} c_i^j\big)
    c^j_k\tilde\bU_k\Big)}_{=\,\vec 0}
    \\
    \;&=\; \sum_{i\in\tilde{\mathcal{V}}}\tilde m_i\tilde\bU_i.
  \end{align*}
\end{proof}
Similarly to the element-wise projection we now rearrange the low and high
order update into
\begin{align*}
  \bU^{H}_i - \bU^{L}_i
  \;=\;
  \sum_{j\in\polA\polC} \kappa \bP_i^j,
  \quad\text{with }
  \bP_i^j\eqq\frac{c_i^j\tilde m_j}{m_i\,\kappa}
  \Big(\tilde\bU_i-\sum_{k\in\tilde{\mathcal{V}}}c_k^j\tilde\bU_k\Big),
\end{align*}
where $\kappa=\text{card}\big(I^K(i)\big)^{-1}$. We introduce convex
limiter bounds $l_i^j\in[0,1]$ such that for $i\not\in\polA\polC$:
\begin{align*}
  \bU^{L}_{i} + \,l_i^j \bP_i^j
  \quad\in\quad
  \conv\big\{\tilde\bU_i,\conv_{j\in
  I^{\polA\polC}(i)}\big(\tilde\bU_j)\big\}.
\end{align*}
Then setting $l^j=\min_{i\in\tilde{\mathcal{V}}}l_i^j$ we form a limited
redistribution update:
\begin{align}
  \label{eq:red_final}
  \bU_{i} \eqq \bU^{L}_{i} + \sum_{j\in\polA\polC} \kappa l^j \bP_i^j
  \quad\in\quad
  \conv\big\{\tilde\bU_i,\conv_{j\in
  I^{\polA\polC}(i)}\big(\tilde\bU_j)\big\}.
\end{align}
\begin{lemma}\label{lem:conservative}
  The convex limiting procedure \eqref{eq:red_final} is conservative.
\end{lemma}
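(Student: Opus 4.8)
The plan is to reduce conservation of the limited redistribution \eqref{eq:red_final} to conservation of the low-order update $P_h^{\text{red.},L}$, which Lemma~\ref{lem:mass_redistribution} already supplies, by showing that the antisymmetric corrections $\sum_{j\in\polA\polC}\kappa\,l^j\bP_i^j$ integrate to zero. The first step is to fix the bookkeeping for mass in the continuous space: for $\bu_h=\sum_{i\not\in\polA\polC}\bU_i\varphi_i^h\in\mathcal{Q}_p(\Pi_h)^m$ written in terms of the continuous Lagrange functions $\varphi_i^h=\tilde\varphi_i^h+\sum_j c_i^j\tilde\varphi_j^h$, one has $\int_\Omega\bu_h\,\mathrm{d}x=\sum_{i\not\in\polA\polC}m_i\bU_i$, because $\int_\Omega\varphi_i^h\,\mathrm{d}x=\tilde m_i+\sum_j c_i^j\tilde m_j=m_i$; with this convention Lemma~\ref{lem:mass_redistribution} reads $\sum_{i\not\in\polA\polC}m_i\bU_i^L=\sum_{i\in\tilde{\mathcal{V}}}\tilde m_i\tilde\bU_i$.

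Next, since \eqref{eq:red_final} sets $\bU_i=\bU_i^L+\sum_{j\in\polA\polC}\kappa\,l^j\bP_i^j$ with each $l^j$ independent of the node index $i$, the task reduces to the per-constraint identity $\sum_{i\not\in\polA\polC}m_i\,\kappa\,\bP_i^j=\vec0$ for each fixed $j\in\polA\polC$. I would obtain this by inserting the definition $\bP_i^j=\frac{c_i^j\tilde m_j}{m_i\,\kappa}\big(\tilde\bU_i-\sum_{k\in\tilde{\mathcal{V}}}c_k^j\tilde\bU_k\big)$: the prefactors $m_i$ and $\kappa$ cancel, leaving $\tilde m_j\sum_{i\not\in\polA\polC}c_i^j\big(\tilde\bU_i-\sum_k c_k^j\tilde\bU_k\big)$, and since the bracketed subtrahend carries no $i$-dependence this equals $\tilde m_j\big(\sum_{i\not\in\polA\polC}c_i^j\tilde\bU_i-(\sum_{i\not\in\polA\polC}c_i^j)\sum_k c_k^j\tilde\bU_k\big)$, which vanishes by the constraint partition identity $\sum_i c_i^j=1$ (valid over $i\not\in\polA\polC$ because $\polA\polC$ is closed, so $c_i^j\neq0$ forces $i$ unconstrained, and the restricted sum coincides with the full one). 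Summing the corrections over $j$ then gives $\sum_{i\not\in\polA\polC}m_i\bU_i=\sum_{i\not\in\polA\polC}m_i\bU_i^L=\sum_{i\in\tilde{\mathcal{V}}}\tilde m_i\tilde\bU_i$, and finally re-expanding the constrained degrees of freedom via $\bU_i=\sum_j c_j^i\bU_j$ (which also certifies membership in $\mathcal{Q}_p(\Pi_h)^m$) returns $\sum_{i\in\tilde{\mathcal{V}}}\tilde m_i\bU_i=\sum_{i\in\tilde{\mathcal{V}}}\tilde m_i\tilde\bU_i$, exactly by the manipulation used for $\bU^L,\bU^H$ in the proof of Lemma~\ref{lem:mass_redistribution}, i.e. $\int_\Omega P_h^{\text{red.}}(\tilde\bu_h)\,\mathrm{d}x=\int_\Omega\tilde\bu_h\,\mathrm{d}x$.

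I do not expect a genuine obstacle: the heart of the argument is a single-line cancellation, mirroring the antisymmetry argument used for $b_{ij}\bR_j^K-b_{ji}\bR_i^K$ in Lemma~\ref{lem:mass_projection}. The one point I would state explicitly is that the cancellation is needed \emph{per constraint} $j$, not merely after summation over $j$: conservation of $\bU^L$ and $\bU^H$ alone only yields $\sum_{i\not\in\polA\polC}m_i\sum_j\kappa\bP_i^j=\vec0$, which is too weak here because the limiter weights $l^j$ differ across constraints; it is precisely the identity $\sum_i c_i^j=1$ that upgrades this to the required term-by-term statement.
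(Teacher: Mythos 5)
Your proposal is correct and follows essentially the same route as the paper: insert the definition of $\bP_i^j$ so the $m_i$ and $\kappa$ prefactors cancel, swap the order of summation, factor out $l^j\tilde m_j$, and invoke the partition identity $\sum_i c_i^j=1$ (together with closedness of $\polA\polC$ to restrict the sum to unconstrained indices) to obtain the per-constraint cancellation $\sum_{i\not\in\polA\polC}m_i\,\kappa\,\bP_i^j=\vec 0$. Your explicit remark that the cancellation must hold per constraint $j$ because the weights $l^j$ vary is exactly the point the paper's computation exploits by pulling $l^j\tilde m_j$ outside the vanishing bracket.
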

\begin{proof}
  We verify that
  \begin{align*}
    \sum_{i\not\in\polA\polC} m_i\sum_{j\in\polA\polC} \kappa l^j \bP_i^j
    \;&=\;
    \sum_{i\not\in\polA\polC}\sum_{j\in\polA\polC} l^j c_i^j\tilde m_j
    \Big(\tilde\bU_i-\sum_{k\in\tilde{\mathcal{V}}}c_k^j\tilde\bU_k\Big)
    \\
    \;&=\;
    \sum_{j\in\polA\polC} l^j\tilde m_j
    \Big(\sum_{i\not\in\polA\polC} c_i^j \tilde\bU_i -
    \sum_{k\not\in\polA\polC}c_k^j
    \big(\sum_{i\in\tilde{\mathcal{V}}} c_i^j\big)
    \tilde\bU_k\Big)
    \\
    \;&=\;\vec0.
  \end{align*}
\end{proof}


\section{Implementational details}
\label{sec:details}
We now describe a concrete implementation strategy and algorithm for the mass
projections introduced in Section~\ref{sec:mass_projection} for a mesh
adaptation cycle. In addition, we comment on how to expand this strategy
for mesh transfer between unrelated meshes that occurs, for example, during
re-meshing. The strategies has been implemented in
\texttt{ryujin}~\cite{maier21b}, which is based on the \texttt{deal.II}
finite element library~\citep{dealIIcanonical, dealII96}.

\subsection{State projection during mesh adaptation}
A mesh adaptation cycle consists of first marking a subset of cells for
coarsening or refinement with a suitable marking strategy, then resolving
some algebraic constraints (such as imposing a maximal level difference
between cells, see Section~\ref{sec:preliminaries}) and finally adjusting
the mesh. \texttt{deal.II} supports this operation on fully distributed
meshes where only a part of the mesh is stored for each participating MPI
process~\cite{dealIIcanonical}, and where the parallel partition of the
distributed mesh changes with each mesh adaptation cycle. \texttt{deal.II}
solves the question how to transfer numerical solutions from the old to the
new mesh by first attaching additional data to each relevant cell that
fully describes the local state, then, if necessary, transferring this data
to the correct MPI rank, and, finally, reconstructing the state vector from
this cell local data. \texttt{deal.II} contains an implementation of this
strategy in form of the \texttt{SolutionTransfer} class that performs plain
Lagrange interpolation. We now describe a modification of this approach
that uses the invariant-domain preserving mass projection outlined in
Section~\ref{sec:mass_projection}. The fact that the mesh transfer
operation projects from one finite element space into a closely related
finite element space associated with a mesh where only a small number of
cells have been coarsened or refined allows for some important shortcuts.
Most notably, for a given cell $K$ the element-wise mass projection
$P_h^{\sharp}\big|_{K}\,:\, \mathcal{Q}_p^{\sharp}(K)^m \rightarrow
\mathcal{Q}_p^{\sharp}(K)^m$
reduces to the identity operation for the case that the cell $K$ is neither
refined nor coarsened. Another practical issue to solve is that of how to
construct the bounds for the convex limiting step.

The distributed mesh class in \texttt{deal.II} provides a callback
\texttt{register\_data\_attach} for computing and attaching said additional
data. The callback takes two arguments: the current cell $K$ to operate on
and a \emph{cell status} $S$ indicating whether the
\texttt{cell\_will\_persist}, \texttt{cell\_will\_be\_refined}, or
\texttt{children\_will\_be\_coarsened}.
Algorithm~\ref{alg:register_data_attach} (on
page~\pageref{alg:register_data_attach} in Appendix~\ref{app:algorithms})
summarizes in pseudo code the implementation found in \texttt{ryujin}. The
call back is invoked at the beginning of the mesh adaptation cycle for
collecting additional data associated with a cell. At the end of the mesh
adaptation cycle the data is used to reconstruct the state vector. For
this, we first invoke another call back, \texttt{notify\_ready\_to\_unpack}
to repopulate the state vector and to perform the nodal averaging (see
Section~\ref{sec:nodal-averaging}). We summarize the implemention of the
second call back in Algorithm~\ref{alg:notify_ready_to_unpack} (on
page~\pageref{alg:notify_ready_to_unpack} in
Appendix~\ref{app:algorithms}). We note that the limiting step for the
refined cell that is performend in
Algorithm~\ref{alg:notify_ready_to_unpack} can be avoided when the
(combined) finite element space of the new child cells already contain the
former coarse cell shape functions.

\subsection{Mass redistribution}
After the state projection we are left with a state vector $\tilde\bU_i$,
where $\tilde\bu_h(\vec x) = \sum_{i\in\tilde\calV}
\tilde\bU_i\tilde\varphi_i^h(\vec x)$ is not compatible with the hanging
node constraints $\polA\polC_p(\Pi_h)$. Following the strategy outlined in
Section~\ref{sec:mass-redistribution} we now construct modified masses
$m_i$ and a modified state vector $\bU_i$ such that
\begin{align*}
  \sum_{i\in\tilde{\mathcal{V}}}\tilde m_i\tilde\bU_i
  \,=\,
  \sum_{i\in\tilde{\mathcal{V}}}\tilde m_i\bU_i
  \,=\,
  \sum_{i\not\in\polA\polC}m_i\bU_i
\end{align*}
holds true, the states $\bU_i$ remain admissible, and $\polA\polC_p(\Pi_h)$
are enforced on $\bu_h(\vec x)=\sum_{i\in\tilde\calV}
\bU_i\tilde\varphi_i^h(\vec x)$; see Algorithm~\ref{alg:redistribute} (on
page~\pageref{alg:redistribute} in Appendix~\ref{app:algorithms}).

\subsection{Smoothness indicators}\label{sec:smoothness}
For the numerical tests we use a simple smoothness indicator computed on a
number of quantities of interest. To this end, let $\beta_{ij}$ denote the
entries of the discrete Laplace matrix that has been assembled subject to
the hanging node constraints $\polA\polC_p(\Pi_h)$ \cite{dealIIcanonical,Shephard1984linear}, that is,
\begin{align*}
  \beta_{ij} =
  \sum_{\substack{m\,\in\,\{i\}\cup I^{\polA\polC}(i) \\
  n\,\in\, \{j\}\cup I^{\polA\polC}(j)}}
  c_i^m\,c_j^n\,
  \int_\Omega\tilde\varphi^h_m(\vec x)\tilde\varphi^h_n(\vec x)\text{d}x
\end{align*}
for $i,j\not\in\polA\polC_p(\Pi_h)$. Here, we have used---by slight
abuse of notation--- the convention $c_i^i=c_j^j=1$. If one of the
indices $i$ or $j$ is constrained, we set $\beta_{ij}=0$. Let
$I(i)=\{j\in\tilde\calV\,:\,\beta_{ij}\not=0\}$.

Given $\{\bQ_i\}_{i\in\tilde\calV}\subset\mathbb{R}^{N}$, a vector of
$N$ quantities of interest that shall be used for constructing a smoothness
indicator, we set:
\begin{gather}
  \label{eq:indicator_numerator_denominator}
  \alpha_i\eqq\sum_{\nu=1}^{N}\frac{|n_i^\nu|}{(1-\kappa)\,d_i^\nu\,+\,
  \kappa\max_{i}d_i^\nu}, \qquad\text{where}
  \\[0.25em]
  \notag
  n_i^{\nu}\eqq\sum_{j\in I(i)}\beta_{ij}(Q_j^\nu-Q_i^\nu),\qquad
  d_i^{\nu}\eqq\sum_{j\in I(i)}|\beta_{ij}|\,\left|Q_j^\nu-Q_i^\nu\right|,
\end{gather}
and where $\kappa\in[0,1]$ is a suitably chosen scaling parameter. We
\emph{extend} the indicator by computing the maximum over the stencil a
selected number of times. For given $R\ge0$, set $\alpha_i^{(0)}=\alpha_i$
and then iterate:
\begin{align*}
  \alpha_i^{(k)} = \max\big(\{\alpha_i^{(k-1)}\}\cup\{\alpha_j^{(k-1)}\,:\,
  j\in I(i)\big\}\big),
  \quad\text{for }k=1,\ldots,R.
\end{align*}
We note here that the indicator defined above is not the most optimal, but it is sufficient for testing the novel projection technique. Developing a novel indicator is out of the scope of this work.

\subsubsection{Threshold marking strategy}
As a final step, the smoothness indicator $\{\alpha_i^{(R)}\}$ is
transformed into a cell-wise indicator:
\begin{align*}
  \alpha_K\;\eqq
  \frac{1}{\#\big\{j\in\tilde\calV\,:\, m_j^K\not=0\big\}}
  \sum_{\substack{j\in\tilde\calV \\ m_j^K\not=0}} \alpha_j^{(R)}.
\end{align*}
Given two configurable parameters,
$0\le\alpha_K^{\text{coar}}\le\alpha_K^{\text{ref}}$, we then mark all
cells with an indicator above a configured threshold for refinement,
$\alpha_K\ge\alpha_K^{\text{ref}}$, and all cells with an indicator below a
configured threshold for coarsening, $\alpha_K\le\alpha_K^{\text{coar}}$.
We have observed numerically that for problems with localized features, a value of $\alpha_K^{\text{ref}}\approx0.2$ is sufficient. For problems with multiple regions of interest, $\alpha_K^{\text{ref}}\approx0.05$ is sufficient.


\section{Numerical illustrations}
\label{sec:illustrations}
We now illustrate the proposed mass-conservative and invariant-domain
preserving projection technique with a suite of simulations to highlight its robustness.
For the sake of completeness, we recall each hyperbolic system of interest and its admissible set. 

\subsection{Preliminaries}
All the simulations are performed with the high performance code, \texttt{ryujin}~\cite{maier21b}. 
The time stepping is done with an invariant-domain preserving three stage, three step Runge--Kutta method as described in~\citet{ern2022invariant}. All tests are performed with the CFL number 0.9. 
The smoothness indicator described in \S\ref{sec:smoothness} is used for the adaptive mesh refinement, which is performed at each simulation cycle. 
To verify mass conservation of our scheme, we measure the relative error of the total mass (at the every simulation cycle) relative to the total mass at the initial time. We observe that in every test with all-slip boundary conditions (that is, no information going into or out of the computational domain), this relative errors stays at machine precision. 

\subsection{Shallow water equations}\label{rem:shallow_water}
The shallow water equations are often used in modeling surface water
waves under the action of gravity. Let
$\bu\eqq(\waterh,\bq)^{\mathsf{T}}$ where $\waterh$ denotes the water
depth, $\bq$ the momentum and $\bv\eqq\bq/\waterh$ the velocity. The
hyperbolic flux is defined by: $\polf(\bu)\eqq(\bv, \bv\otimes\bq +
\frac{1}{2}g\waterh^2\polI_d)^{\mathsf{T}}$ with $g = \SI{9.81}{m
s^{-2}}$ the gravitational constant. The admissible set for the
shallow water equations is given by:
$\calA\eqq\{\bu\eqq(\waterh,\bq)^{\mathsf{T}} : \waterh > 0\}$.
The invariant-domain preserving spatial approximation technique adopted here for the shallow water equations is outlined in~\cite{guermond2025high}.

\subsubsection{Dam break over three conical obstacles}
We test the ability of the method to handle dry states. We consider the topography configuration introduced in~\cite{umetsu1986finite}, consisting of three conical obstacles in a channel. The initial setup is a dam break configuration over a dry bottom: $\mathsf{h}_0(\bx)=\SI{1.875}{m}$ for $x_1 \leq \SI{16}{m}$ and $0$ elsewhere. The initial discharge, $\mathbf{q}_0(\bx) = \bm{0}$ is zero everywhere. The domain is $D = (0, \SI{75}{m})\times(0, \SI{30}{m})$ with slip boundary conditions. The final time is set to $T = \SI{8}{s}$. We show the results in Figure~\ref{fig:swe-dam-break}. 
\begin{figure}[t]
    \centering
    \includegraphics[clip,trim = {0, 0, 0, 10},width=0.59\linewidth]{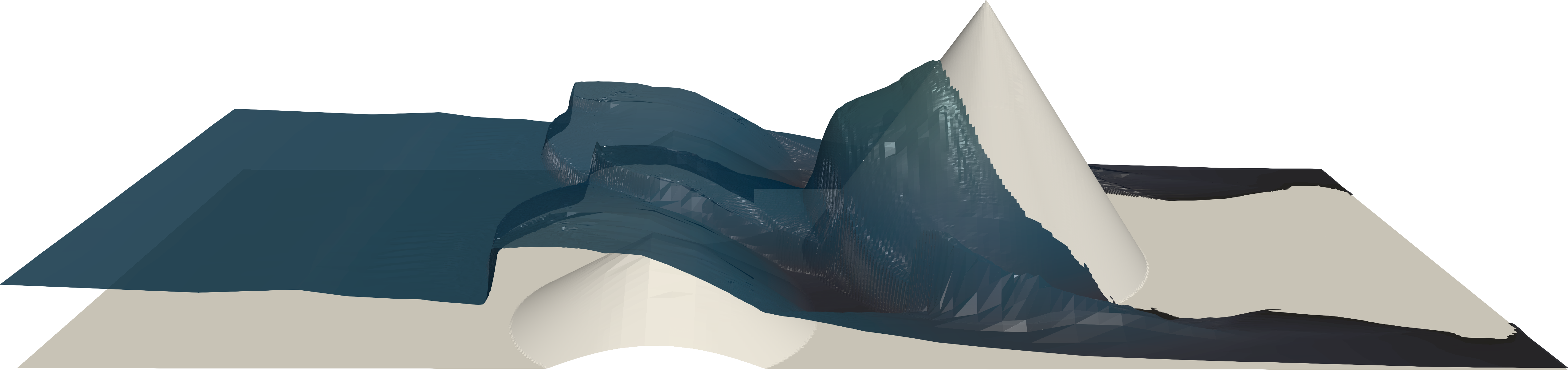}
    \includegraphics[clip,trim = {0, 0, 0, 0},width=0.395\linewidth]{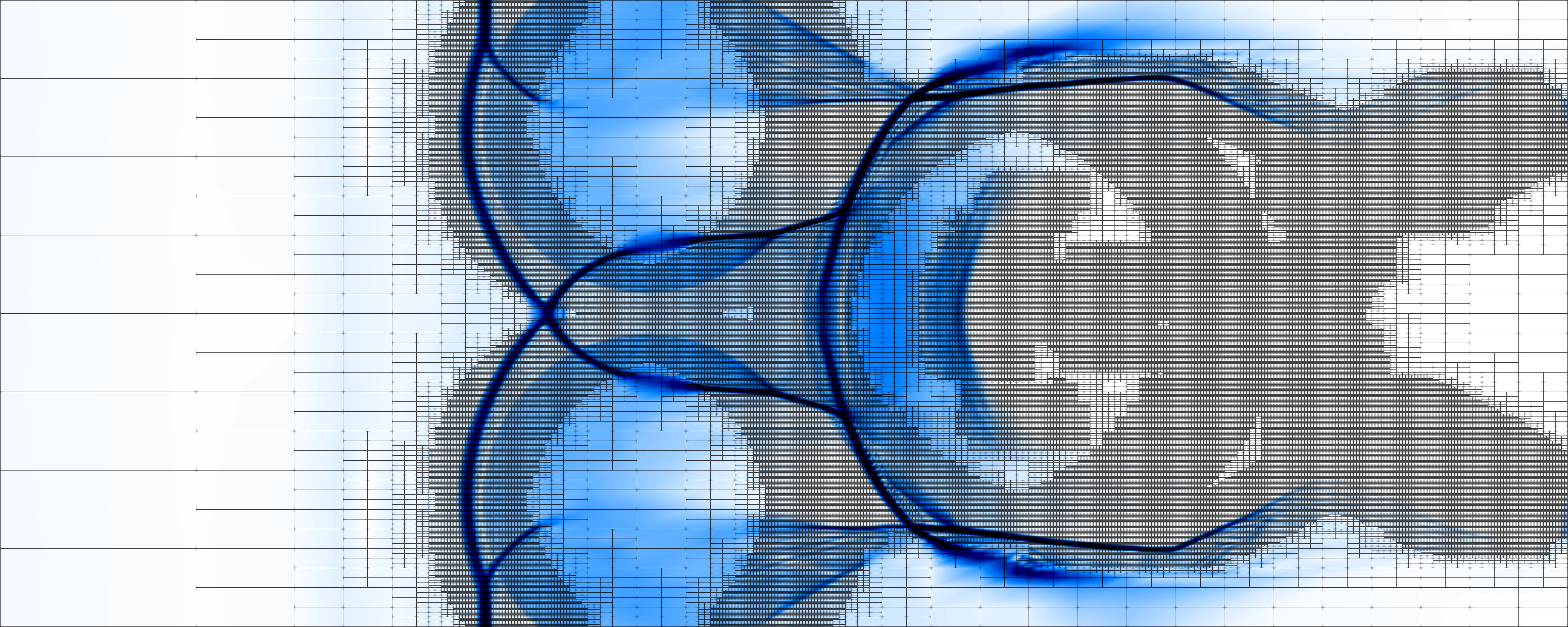}
    \caption{Dam break -- Water elevation and topography (left); water depth numerical schlieren with computational mesh (right) at $T = \SI{8}{s}$. }
    \label{fig:swe-dam-break}
    \centering
    \def\addlegendimage{\csname pgfplots@addlegendimage\endcsname}
    \begin{centering}
        \begin{tikzpicture}
        \begin{customlegend}[legend columns=3,legend style={align=left,draw=white!15!black, anchor=west},legend entries={Conventional, IDP}]
        \addlegendimage{black, thick}
        \addlegendimage{blue, ultra thick}   
        \end{customlegend}
     \end{tikzpicture}\end{centering}\\
    \centering
    \begin{tikzpicture}[font=\normalsize, trim axis left, trim axis right]
\begin{axis}[%
xtick pos=left, ytick pos=left, ymode=log, ylabel={$\Delta t$ [s]}, xlabel={Simulation time [s]},  every axis plot/.append style={ultra thick}, width=0.65\linewidth,height=0.3\linewidth
]
\addplot[black, thick] table[x expr =\thisrow{index}*0.01354, y=dt, col sep=space] {swe-broken-dt.dat};
\addplot[blue, ultra thick] table[x expr =\thisrow{index}*0.0134, y=dt, col sep=space] {swe-correct-dt.dat};
\end{axis}
\end{tikzpicture}
    \caption{Dam break -- Comparison of time-step with full mass-conservative IDP projection (blue) vs mass-conservative non-IDP projection.}
    \label{fig:time-step-comp}
\end{figure}
We note that the projection technique outlined above does not take into account source terms and thus we can not expect the well-balancing property to hold. However, this is out of the scope of the present contribution. We see the IDP projection technique is able to handle the complex wetting/drying process.

\subsubsection{Robustness check}
We perform the same test as above with no limiting during the projection process. That is, the projection is only mass-conservative but not invariant-domain preserving. We observe numerically that the projection without limiting leads to a degradation of the time step -- particularly in the ``drying'' process. In Figure~\ref{fig:time-step-comp}, we plot the time-step $\Delta t$ over simulation time for each simulation. We see that the simulation without AMR limiting leads to a time-step on the order of $\calO(10^{-10})$ (necessitating termination), demonstrating the need for the AMR projection limiting procedure. In contrast, with the proposed IDP AMR scheme, we find that $\Delta t \sim\cal{O}(10^{-2})$ throughout the entire simulation.

\subsection{Compressible Euler Equations}
The Euler equations are used in modeling compressible gas dynamics. Let
$\bu\eqq(\rho,\bbm, E)^{\mathsf{T}}$ with $\rho$ denoting the material
density, $\bbm$ the momentum and $E$ the total energy. Let $\bv\eqq\bbm /
\rho$ be the velocity. The hyperbolic flux is defined by:
$\polf(\bu)\eqq(\bv, \bv\otimes\bbm + p(\bu)\polI_d, \bv(E +
p(\bu)))^{\mathsf{T}}$ where $p(\bu)$ is the pressure law defined by an
equation of state. The constraints that make up the admissible set for
the Euler equations are often motivated by the underlying equation of
state. Assuming that pressure is non-negative, we consider the following
generic admissible set for the Euler equations:
$\calA\eqq\{\bu\eqq(\rho,\bbm, E)^{\mathsf{T}} : \rho > 0,\,e(\bu) >
0\}$ where $e(\bu)\eqq \rho^{-1}(E  - \frac12 \rho\|\bv\|^2)$ is the
specific internal energy. The invariant-domain preserving spatial approximation techniques for ideal gas and general equations of state are outlined in~\citep{guermond2018second} and \citep{clayton2023robust}, respectively.

\subsubsection{Astrophysical jet (ideal gas)}
We now consider a high Mach astrophysical jet benchmark introduced in~\cite{ha2005numerical} and reproduced in~\cite[Sec.~3.4]{zhang2011positivity},~\cite[Sec.~5.2.3]{rueda2022subcell} and many others.
To highlight the robustness of the proposed projection method, we modify the initial conditions slightly so that the astrophysical jet has a Mach number of 4000. 
More specifically, let the equation of state for the compressible Euler equations be the ideal gas law with $\gamma = \frac{5}{3}$. Let $\bw(\bx, t) \eqq (\rho, v_1, v_2, p)^{\mathsf{T}}$ denote the primitive state. At initial time, the state is ambient with $\bw_0(\bx) = (0.5, 0, 0, 0.4127)^{\mathsf{T}}$. Let $\rho_{\text{jet}} = 5$ and $p_{\text{jet}} = 0.4127$. Then, we set $v_{\text{jet}} = 4000 \sqrt{\gamma \frac{p_{\text{jet}}}{\rho_{\text{jet}}}}\approx 1483.59922710503$. Then, the jet with width $\SI{0.1}{m}$ is defined via the state $\bw_0(\bx) = (\rho_{\text{jet}}, v_{\text{jet}}, 0, p_{\text{jet}})^{\mathsf{T}}$ and is introduced into the domain at $(\SI{0}{m}, \SI{0}{m})$ via a Dirichlet boundary condition.

The computational domain is $D = (\SI{0}{m}, \SI{0.2}{m})\times(-\SI{0.1}{m}, \SI{0.1}{m})$ with a Dirichlet boundary on the left and free flow boundary conditions elsewhere.
The final time is set to $T = \SI{2e-4}{s}$. 
We report in Figure~\ref{fig:ideal-jet} the result at $t = \SI{1e-4}{s}$ showing the density in logarithmic scale and the computational mesh above the $x_2$-centerline. 
We see that even though the problem is highly kinetic energy dominated, the projection technique did not violate the invariant domain properties (\ie the internal energy remains positive).
\begin{figure}
    \centering
    \includegraphics[clip,trim = {0, 0, 0, 0},width=0.497\linewidth]{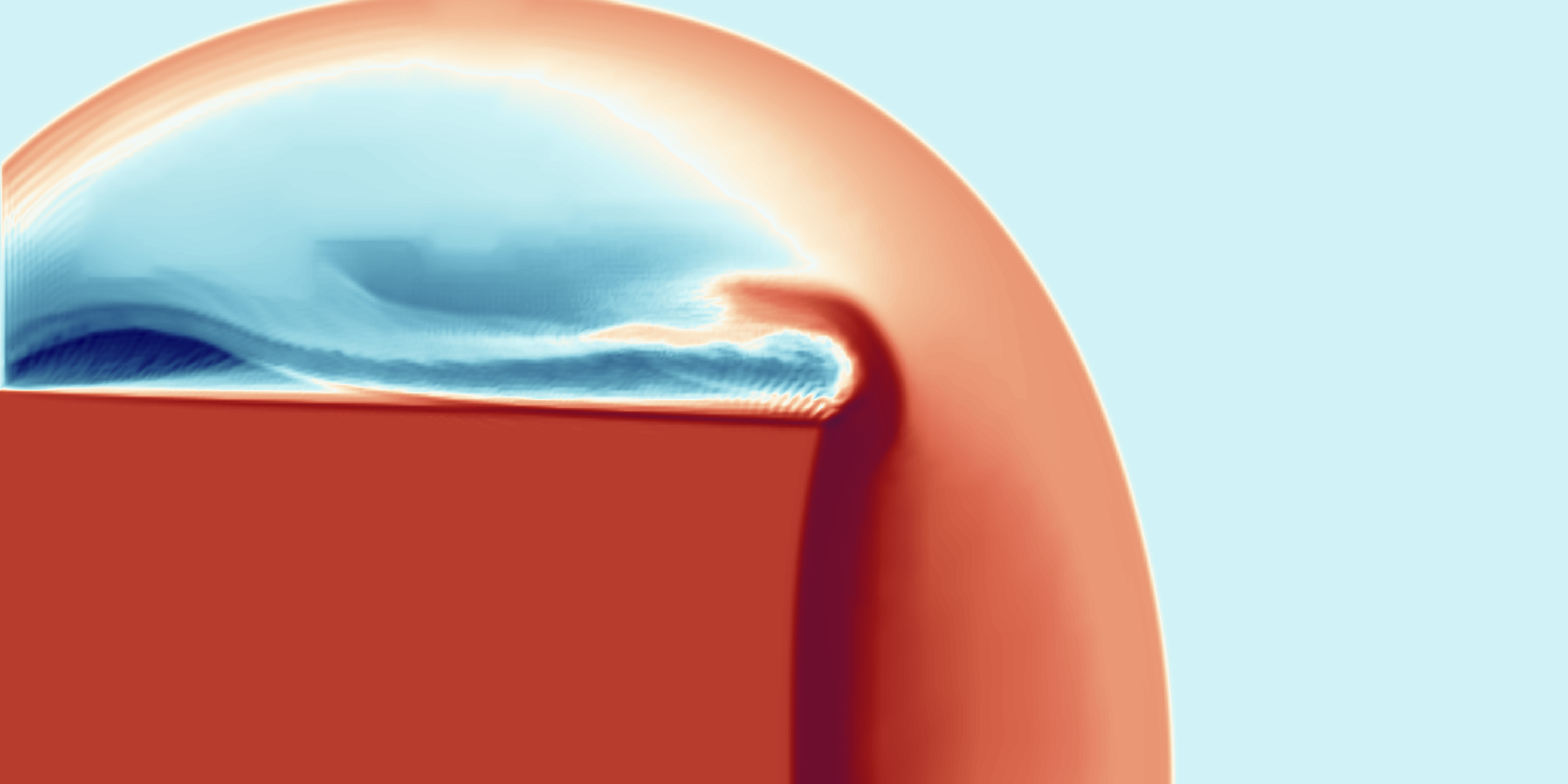}
    \hspace{-2mm}
    \includegraphics[clip,trim = {0, 0, 0, 0},width=0.497\linewidth]{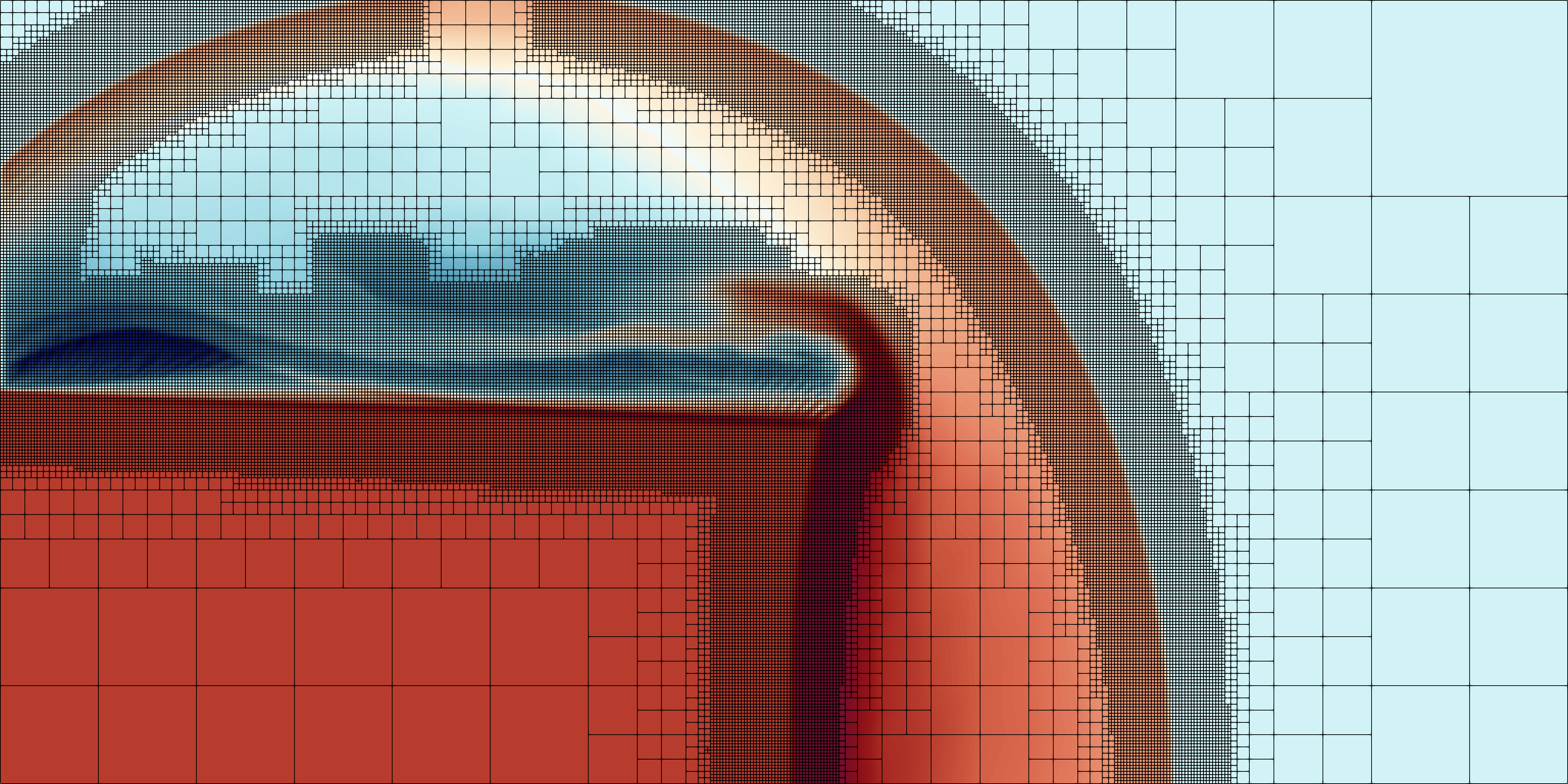}
    \caption{Astrophysical jet -- Density in logarithmic scale (left); density and computational mesh (right) at $t = \SI{1e-4}{s}$ above the $x_2 = 0$ centerline.}
    \label{fig:ideal-jet}

    \includegraphics[clip,trim = {0, 0, 0, 0},width=0.49\linewidth]{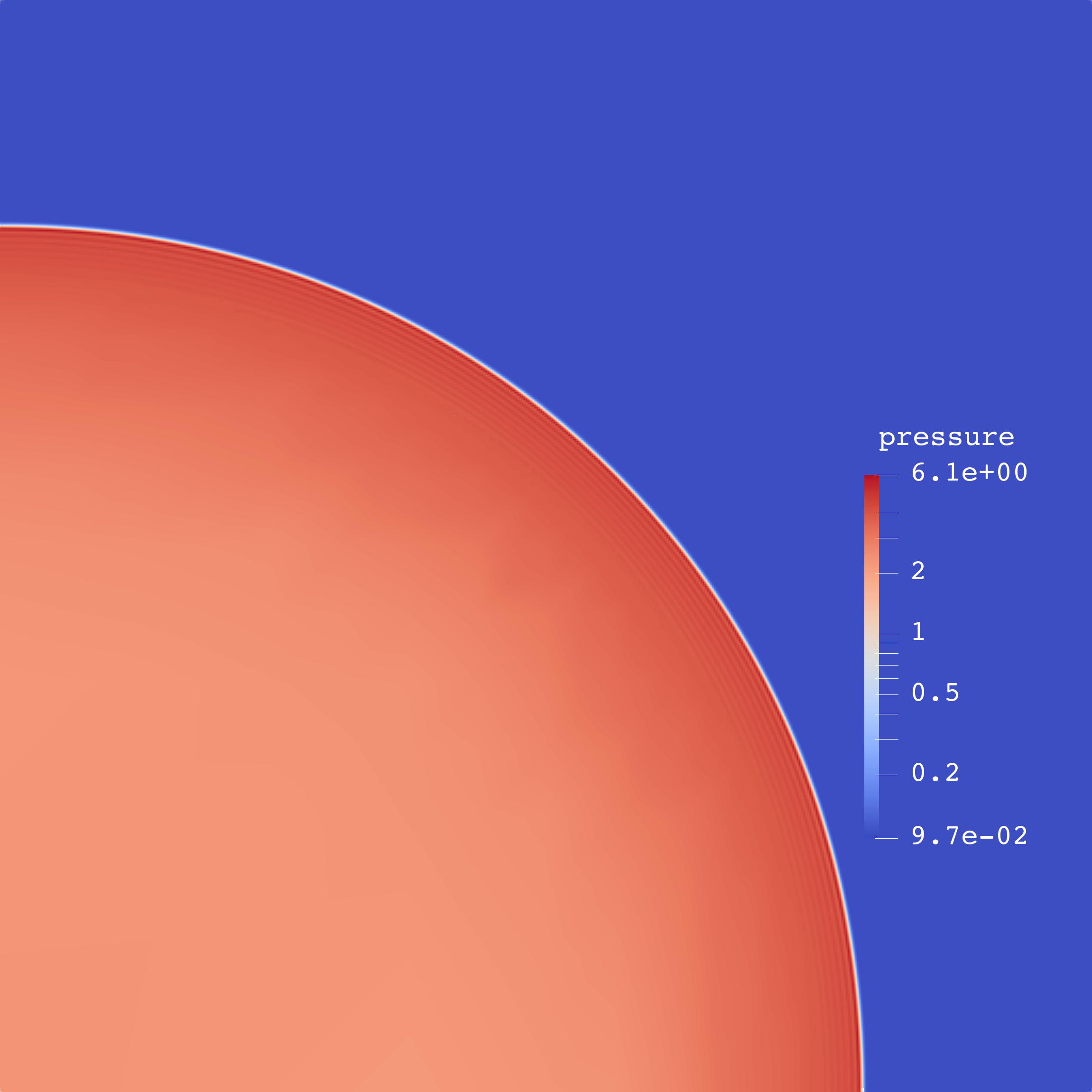}
    \includegraphics[clip,trim = {0, 0, 0, 0},width=0.49\linewidth]{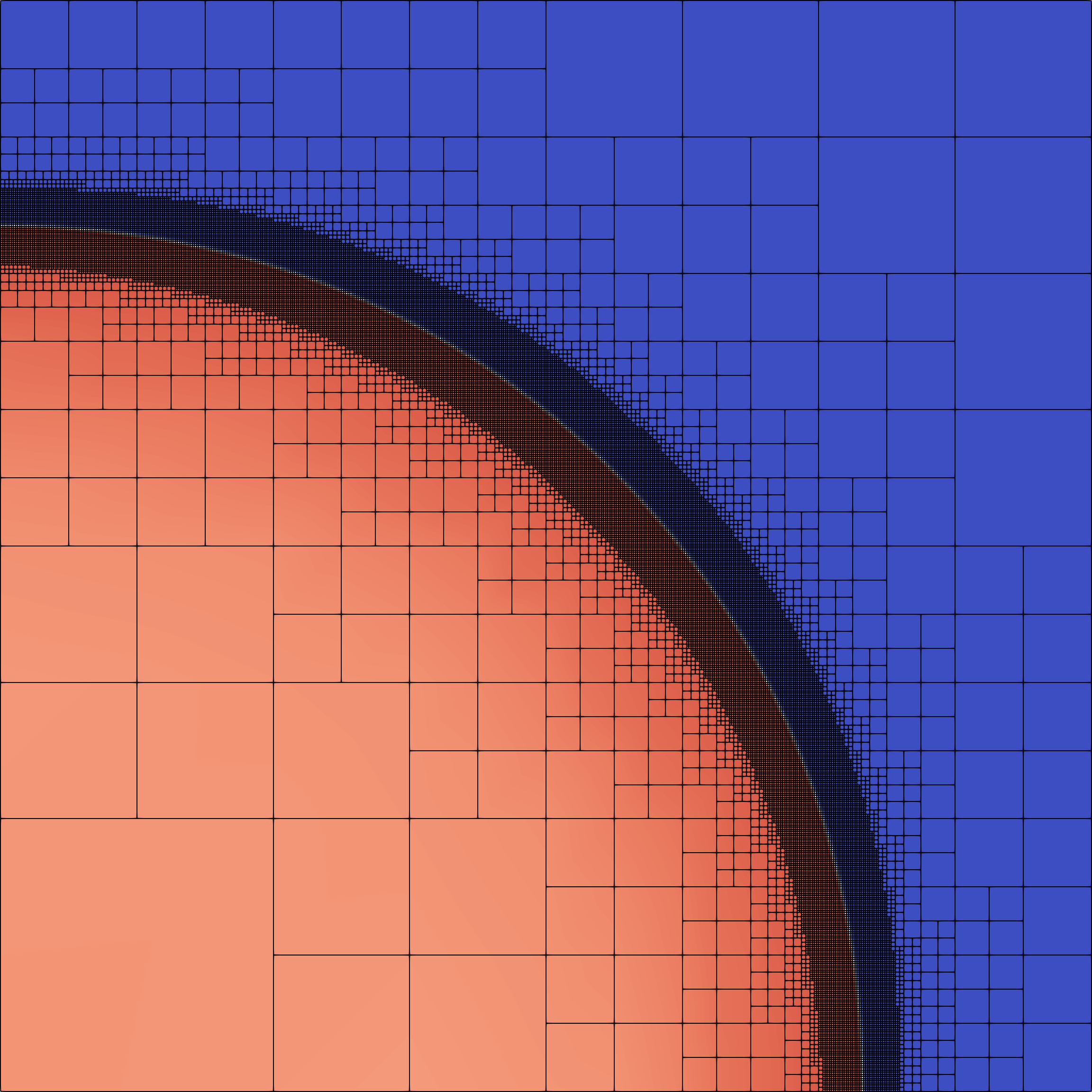}
    \caption{Sedov-like blast wave -- Pressure in logarithmic scale (left) and pressure with computational mesh (right) at $t = \SI{0.05}{s}$ with the Jones-Wilkins-Lee equation of state. }
    \label{fig:jwl-sedov}
\end{figure}

\subsubsection{Sedov-like explosion with Jones-Wilkins-Lee EOS}
To demonstrate the robustness of the proposed method with respect to
general equations of state, we perform a 2D Sedov-like blast wave benchmark
using the Jones-Wilkins-Lee equation of state (\cite{lee_1968,
segletes2018examination}). The equation of state parameters are given in
Table~\ref{tab:jwl_eos}.
\begin{table}[ht!]
  \centering
  \begin{tabular}[b]{lllllll}
      \toprule
      $A$ & $B$ & $R_1$ & $R_2$ & $\omega$ & $\rho_0$ & $e_0$
      \\[0.3em]
      \SI{6.321e3}{}   & \SI{-4.472}{}     & 11.3 & 1.13 & 0.8938 & \SI{1}{} & \SI{0}{}\\
      \bottomrule
    \end{tabular}
  \caption{Jones-Wilkins-Lee equation of state parameters.}
  \label{tab:jwl_eos}
\end{table}
The setup is as follows. The ambient state is set by $\bw_0(\bx) =
(\SI{1}{kg~m^{-3}}, 0, 0, \SI{0.1}{Pa})^{\mathsf{T}}$. The higher-pressure
region $\bw_0(\bx) = (\SI{1}{kg~m^{-3}}, 0, 0, \SI{100}{Pa})^{\mathsf{T}}$
is initialized in $\|\bx\|\leq \SI{0.05}{m}$. The computational domain is
$D = (\SI{0}{m}, \SI{0.4}{m})^2$. We set the final time to $T =
\SI{0.05}{s}$. We show in Figure~\ref{fig:jwl-sedov} the pressure (in
logarithmic scale) with the computational mesh at the final time. We see
that even for general equations of state, the projection technique does not
produce negative internal energies in the presence of large jumps in
pressure.

\paragraph{Performance comparison.}
A short remark on the performance of the mesh adaptation is in order. For
comparison, we redo the computation of the Sedov-like explosion shown in
Figure~\ref{fig:jwl-sedov} with a uniformly refined grid on the finest
refinement level ($l=9$ with 263\,169 degrees of freedom per component)
that we allowed during the mesh adaptation. The simulation ran for 114
seconds on 8 cores, performing 1358 3rd order Runge-Kutta time steps and
achieved a per-rank throughput of 1.18 
million state updates per second.

In contrast, the adaptive strategy had a final resolution of 41\,883
degrees of freedom per component. The simulation ran for 34.6 
seconds on 8
cores, performing 1178 3rd order Runge-Kutta time steps and achieved a
per-rank throughput of 0.5051 million state updates per second. The mesh
was adapted every tenth simulation cycle. A total of 15.0 
seconds was
spent in the mesh transfer and an additional 8.11 seconds of runtime was
required for re-initializing data structures such as matrices and vectors.
In summary, we report a factor 3.30 speed up in runtime when using our
adaptive mesh refinement strategy.

\subsection{Compressible multi-species Euler Equations}
The multi-species Euler equations are used in modeling a compressible mixture
of two species in mechanical and thermal equilibrium. Let $\bu\eqq(Y_0
\rho, Y_1 \rho ,\bbm, E)^{\mathsf{T}}$ where $\rho$ denotes the mixture
density, $Y_0$ the mass fraction of the first species, $Y_1\eqq 1 - Y_0$
the mass fraction of the second species, $\bbm$ the mixture momentum and
$E$ the total mixture energy. Let $\bv\eqq\bbm / \rho$ be the mixture
velocity. The hyperbolic flux is defined by: $\polf(\bu)\eqq(Y_0\bbm,
Y_1\bbm,\bv\otimes\bbm + p(\bu)\polI_d, \bv(E + p(\bu)))^{\mathsf{T}}$
where $p(\bu)\eqq (\overline{\gamma} - 1)\rho e$ is the ideal mixture
pressure law. The admissible set is defined by $\calA\eqq\{\bu\eqq(Y_0
\rho, Y_1 \rho ,\bbm, E)^{\mathsf{T}} : Y_0\rho > 0,\,Y_1\rho >
0, \,e(\bu)>0, \,\overline{s}(\bu) > s_{\text{min}}\}$ where $e(\bu)\eqq
\rho^{-1}(E  - \frac12 \rho\|\bv\|^2)$ is the mixture specific internal
energy and $\overline{s}$ is the mixture specific entropy.
The respective invariant-domain preserving technique for the multi-species Euler equations with an ideal gas mixture law is outlined in~\citep{IDP_multi_species}.

\subsubsection{Shock-bubble interaction}
We consider a 2D shock-bubble interaction problem for the multi-species Euler equations, which is a common benchmark in the literature (see: \citep{quirk1996dynamics,renac2021entropy,wang2025adaptive} and many others). In this work, we adopt the setup in~\citep[Sec.~6.3.1]{IDP_multi_species}, which consists of a shock wave traveling at Mach 1.43 in air (species $Y_0 \rho$) colliding with a krypton bubble (species $Y_1 \rho$). We refer the reader to~\citep{IDP_multi_species} for the exact setup. A similar setup can be found in~\citep{layes2007quantitative}.

The computational domain is $D = (-0.05,\SI{0.25}{m})\times(0,\SI{0.08}{m})$ and the final time is set to $T = \SI{246}{\mu s}$. 
We show the numerical schlieren for $Y_0 \rho$ with the AMR computational mesh in Figure~\ref{fig:multi-mesh} (top) and with the uniform mesh (below). We note that the AMR mesh at the final time consists of 33,474 cells while the uniform fine mesh consists of 61,440 cells. We observe the IDP projection technique was able to adhere to the respective admissible set and did not produce any negative partial densities for each species.

\begin{figure}
    \centering
    \includegraphics[clip,trim = {0, 0, 0, 0},width=0.85\linewidth]{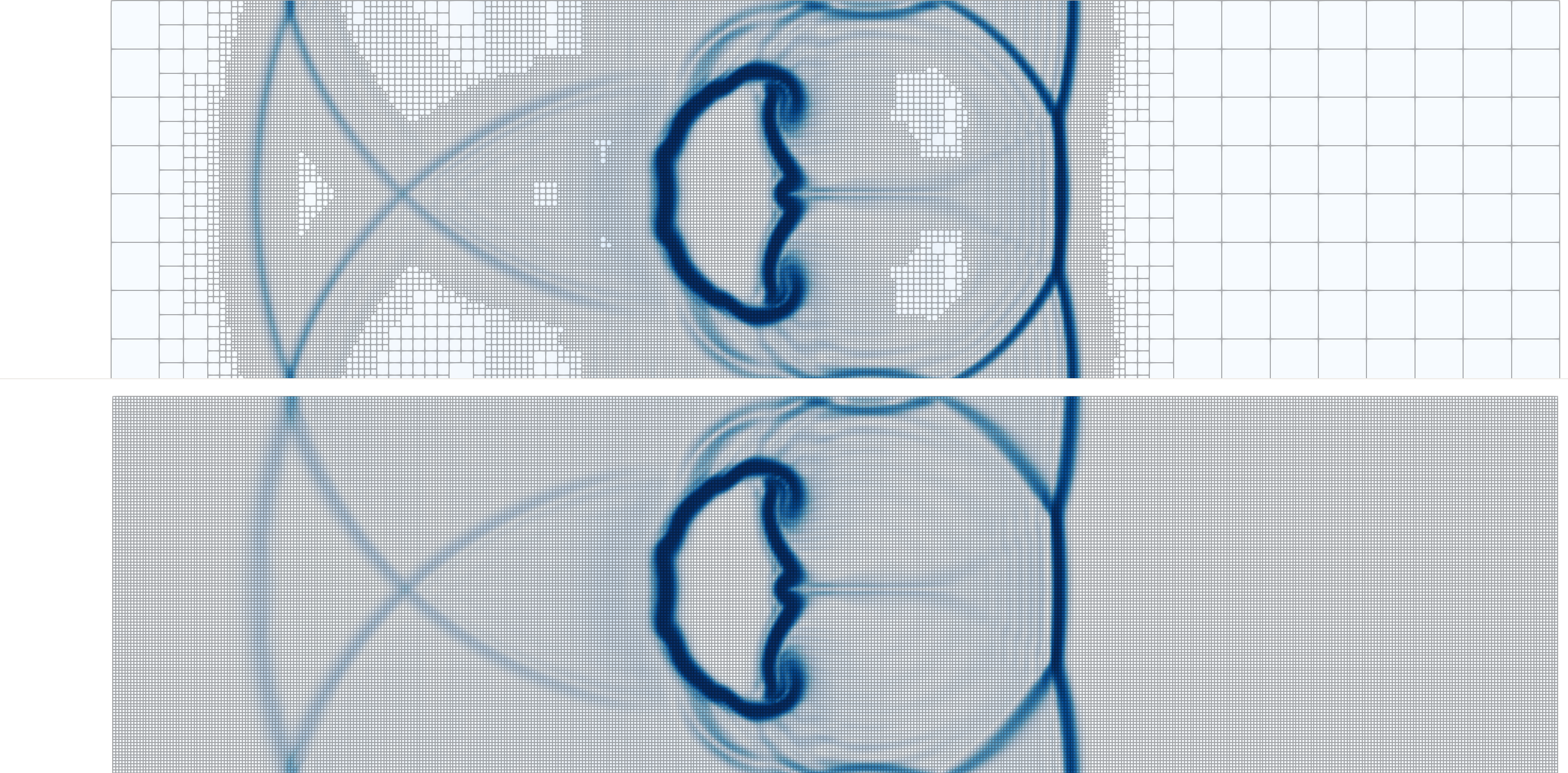}
    \caption{Shock-bubble interaction -- Numerical schlieren for $Y_0\rho$ with AMR mesh (top) and finer uniform mesh (bottom).}
    \label{fig:multi-mesh}
\end{figure}


\section{Conclusion}
\label{sec:conclusion}
We demonstrated a provably invariant-domain preserving, fully adaptive numerical method supported by a robust projection technique. The proposed scheme, by a transformation sequence through compatible solution spaces, guarantees conservation and membership in relevant convex sets. Such properties, essential for the study of hyperbolic systems, ensure the maintenance of physical invariants by the resulting discrete solutions even in the context of adaptive, hierarchical discretizations.
In addition to proving the conservative and IDP properties of our method, we studied a series of benchmark problems over a range of hyperbolic systems, targeting replicable configurations for shallow water and single- and multi-species Euler equations, including an example with a non-ideal (Jones-Wilkins-Lee) equation of state. All benchmark problems confirm the theoretical characteristics in practice and, moreover, the simultaneous ability to allocate computational resources precisely where needed to resolve physics of interest.
Finally, given the mechanical intricacies of the proposed method, we provided detailed high-performance algorithms, with an associated implementation in the open-source code \texttt{ryujin} \cite{maier21b}.

\clearpage
\newpage
\appendix
\section{Algorithmic details}
\label{app:algorithms}
We now provide the algorithms for the methodology described in section~\ref{sec:details}.
\\[2cm]

\begin{algorithm}[h!p]
  \caption{\texttt{register\_data\_attach}\,(cell $K$, cell status $S$, vector $\bU$)}
  \label{alg:register_data_attach}
  \DontPrintSemicolon
  \eIf{$S=$\texttt{cell\_will\_persist} or $S=$\texttt{cell\_will\_be\_refined}}{
    attach $\big\{\bU_i\,:\, i\in\tilde{\mathcal{V}}\text{ s.\,t. }m_i^K\not=0\big\}$ to cell
    $K$\;
    \\
    \Return
  }{
    Let $\{C_k\,:\,k=1,$ \ldots, $2^d\}$ denote the set of children of $K$ that
    will be removed by the coarsening operation. Let
    $\big\{\hat\varphi_i^{K}(\vec x)\big\}$ denote the set of (future) shape
    functions with support on $K$. We set
    $m_i^K\,=\,\int_K\hat\varphi_i^K(\vec x)\text{d}x$, and
    $m_{ij}^K\,=\,\int_K\hat\varphi_i^K(\vec x)\varphi_j^K(\vec
    x)\text{d}x.$
    Recall that $\big\{\tilde\varphi_j^h(\vec
    x)\,:\,j\in\tilde\calV\big\}$ is the Lagrange basis of
    $\tilde{\mathcal{Q}}_{p}(\Pi_h)$.\\[-0.75em]

    \texttt{bounds}$\;\leftarrow\;$initialize\;

    \For{$k=1,$ \ldots, $2^d$}{
      \For{i with $\tilde\varphi_i^h(\vec x)$ having support on $C_k$}{
        \texttt{bounds}$\;\leftarrow\;\text{accumulate}\,(\texttt{bounds},\,\bU_i)$
        \;\\
        \For{j with $\hat\varphi_j^K(\vec x)$ having support on $K$}{
          $\bR_i^K\;\leftarrow\; \bR_i^K \,+\,
          \bU_j\int_K\tilde\varphi_j^h(\bx)\hat\varphi_i^K(\bx)\text{d}x$.
        }
      }
    }
    Compute the quantities $\bU_{i,K}^{\sharp,L}$ and $\bP_{ij}^K$ with
    \eqref{eq:element_wise_ri} and \eqref{eq:element_wise_pi}.\;
    \\
    \For{i with $\hat\varphi_i^K(\vec x)$ having support on $K$}{
      \For{j with $\hat\varphi_j^K(\vec x)$ having support on $K$}{
        $l^K_{ij}\;\leftarrow\;\text{limiter}\,\big(\texttt{bounds}\,;\,
        \bU_{i,K}^{\sharp,L}\,,\,\bP_{ij}^K\big)$
      }
    }
    \For{i with $\hat\varphi_i^K(\vec x)$ having support on $K$}{
      $\bU^{\sharp}_{i,K} \;\leftarrow\; \bU^{\sharp,L}_{i,K}$\;
      \\
      \For{j with $\hat\varphi_j^K(\vec x)$ having support on $K$}{
        $\bU^{\sharp}_{i,K} \;\leftarrow\; \bU^{\sharp}_{i,K} \,+\,
        \kappa\,\min(l^K_{ij},l^K_{ji}) \bP_{ij}^K$
      }
    }
    attach $\big\{\bU^{\sharp}_{i,K}\,:\, i\in\tilde{\mathcal{V}}\text{
    s.\,t. }m_i^K\not=0\big\}$ to cell $K$\;
    \\
    \Return
  }
\end{algorithm}

\begin{algorithm}[p]
  \caption{\texttt{notify\_ready\_to\_unpack}\,(cell $K$, cell status $S$,
    cell data $\big\{\bU^{\sharp}_{i,K}\big\}$)}
  \label{alg:notify_ready_to_unpack}
  \DontPrintSemicolon

  Let $\big\{\tilde\varphi_j^h(\vec x)\,:\,j\in\tilde\calV\big\}$ denote
  the Lagrange basis of $\tilde{\mathcal{Q}}_{p}(\Pi_h)$ constructed for
  the new mesh.\\[-0.75em]

  \eIf{$S=$\texttt{cell\_will\_persist} or
  $S=$\texttt{children\_will\_be\_coarsened}}{
    \For{i with $\tilde\varphi_i^h(\vec x)$ having support on $K$}{
      $\tilde m\bU_i\;\leftarrow \tilde m\bU_i\,+\,m_i^K\bU^{\sharp}_{i,K}$
    }
    \Return
  }{
    Let $\{C_k\,:\,k=1,$ \ldots, $2^d\}$ denote the set of (newly created)
    children of $K$.
    Let $\big\{\hat\varphi_i^{K}(\vec x)\big\}$ denote the set of
    (previous) shape functions with support on $K$.\\[-0.75em]

    \For{$k=1,$ \ldots, $2^d$}{
      \texttt{bounds}$\;\leftarrow\;$initialize\;\\
      \For{i with $\tilde\varphi_i^h(\vec x)$ having support on $C_k$}{
        \texttt{bounds}$\;\leftarrow\;\text{accumulate}\,(\texttt{bounds},\,\bU_i)$
        \;\\
        \For{j with $\hat\varphi_j^K(\vec x)$ having support on $K$}{
          $\bR_i^K\;\leftarrow\; \bR_i^K \,+\,
          \bU_j\int_{C_k}\tilde\varphi_j^h(\bx)\hat\varphi_i^K(\bx)\text{d}x$.
        }
      }
      Compute the quantities $\bU_{i,C_k}^{\sharp,L}$ and $\bP_{ij}^{C_k}$
      with \eqref{eq:element_wise_ri} and \eqref{eq:element_wise_pi}.\;
      \\
      \For{i with $\tilde\varphi_i^{h}(\vec x)$ having support on $C_k$}{
        \For{j with $\tilde\varphi_j^{h}(\vec x)$ having support on $C_k$}{
          $l^{C_k}_{ij}\;\leftarrow\;\text{limiter}\,\big(\texttt{bounds}\,;\,
          \bU_{i,C_k}^{\sharp,L}\,,\,\bP_{ij}^{C_k}\big)$
        }
      }
      \For{i with $\tilde\varphi_i^{h}(\vec x)$ having support on $C_k$}{
        $\bU^{\sharp}_{i,C_k} \;\leftarrow\; \bU^{\sharp,L}_{i,C_k}$\;
        \\
        \For{j with $\tilde\varphi_j^h(\vec x)$ having support on $C_k$}{
          $\bU^{\sharp}_{i,C_k} \;\leftarrow\; \bU^{\sharp}_{i,C_k} \,+\,
          \kappa\,\min(l^{C_k}_{ij},l^{C_k}_{ji}) \bP_{ij}^{C_k}$
        }
        $\tilde m\bU_i\;\leftarrow\; \tilde m\bU_i\,+\,m_i^{C_k}\bU^{\sharp}_{i,C_k}$
      }
    }
    \Return
  }
\end{algorithm}

\begin{algorithm}[p]
  \caption{\texttt{redistribute}\,($\tilde m$, $\tilde m\bU_i$, $\polA\polC_p(\Pi_h)$)}
  \label{alg:redistribute}
  \DontPrintSemicolon

  \For{$i\in\tilde\calV$}{
    $m_i \;\leftarrow\;\tilde m_i$
    \;\\
    $m\bU_i \;\leftarrow\;\tilde m\bU_i$
    \;\\
    $\bU_i \;\leftarrow\;\tilde m\bU_i / \tilde m_i$\;
  }

  \For{$\big(i,\,\{c^i_k\,:\,k\in\tilde{\mathcal{V}}\}\big)\in\polA\polC_p(\Pi_h)$}{
    \texttt{bounds}$_i\;\leftarrow\;$initialize\;\\
    \texttt{bounds}$_i\;\leftarrow\;$accumulate$\,(\texttt{bounds}_i,\,\bU_i)$\;\\
    \For{$k\in\tilde{\mathcal{V}}$ with $c^i_k\not=0$}{
      \vspace{0.2em}
      $\bU_i^{\polA\polC} \;\leftarrow\; \bU_i^{\polA\polC} \,+\,c^i_k\bU_k$\;
      \\
      \texttt{bounds}$_i\;\leftarrow\;$accumulate$\,(\texttt{bounds}_i,\,\bU_k)$\;
    }
    \For{$k\in\tilde{\mathcal{V}}$ with $c^i_k\not=0$}{
      $m\bU_k\;\leftarrow\;m\bU_k\,+\, c_k^i \tilde m\bU_i$
      \;\\
      $m_k\;\leftarrow\;m_k\,+\, c_k^i\tilde m_i$
      \;\\
      $m\bP_k^i\;\leftarrow\; c_k^i\tilde
      m_i\,\big(\bU_k-\bU_i^{\polA\polC}\big)$\;
    }
  }
  \For{$i\not\in\polA\polC_p(\Pi_h)$}{
    $\bU_i \;\leftarrow\;\tilde m\bU_i / \tilde m_i$\;
  }
  \For{$\big(i,\,\{c^i_k\,:\,k\in\tilde{\mathcal{V}}\}\big)\in\polA\polC_p(\Pi_h)$}{
    $l^{i}\;\leftarrow\;1$
    \;\\
    \For{$k\in\tilde{\mathcal{V}}$ with $c^i_k\not=0$}{
      $\kappa_k\;\leftarrow\;\#\{i\,:\,c^i_k\not=0\}$\;\\
      $l^{i}_{k}\;\leftarrow\;\text{limiter}\,\big(\texttt{bounds}_i\,;\,
      m_k^{-1}m\bU_{k}\,,\,m_k^{-1}\kappa_k\,m\bP_{k}^{i}\big)$
      \;\\
      $l^{i}\;\leftarrow\;\min(l^{i},l^{i}_k)$\;
    }
    \For{$k\in\tilde{\mathcal{V}}$ with $c^i_k\not=0$}{
      $m\bU_k\;\leftarrow\;m\bU_k\,+\, l^i m\bP^i_k$
    }
  }
  \For{$i\not\in\polA\polC_p(\Pi_h)$}{
    $\bU_i \;\leftarrow\;\tilde m\bU_i / \tilde m_i$\;
  }
  \For{$\big(i,\,\{c^i_k\,:\,k\in\tilde{\mathcal{V}}\}\big)\in\polA\polC_p(\Pi_h)$}{
    $m_i\;\leftarrow\;0$\;\\
    $\bU_i \;\leftarrow\;0$\;\\
    \For{$k\in\tilde{\mathcal{V}}$ with $c^i_k\not=0$}{
      $\bU_i \;\leftarrow\;\bU_i\,+\,c^i_k\bU_k$\;
    }
  }
  \Return $(m,\bU)$
\end{algorithm}

\clearpage
\newpage

\bibliographystyle{abbrvnat}

\end{document}